\newtheorem{theorem}{Theorem}
\newtheorem{corollary}[theorem]{Corollary}
\newtheorem{lemma}[theorem]{Lemma}
\newtheorem{proposition}[theorem]{Proposition}
\newcounter{remark}
\newenvironment{remark}{\rem\rm}{\endrem}
\newenvironment{proof}[1][Proof]{\noindent\textbf{#1.} }{\ \rule{0.5em}{0.5em}}
\newcommand{\R}{\mathbb{R}}%
\DeclareMathOperator*\dom{dom}%
\DeclareMathOperator*\cl{cl}%
\DeclareMathOperator*\co{co}%
\DeclareMathOperator*\cone{cone}%
\DeclareMathOperator*\inte{int}%
\DeclareMathOperator*\sqri{sqri}%
\DeclareMathOperator*\pr{Pr}%
\title{Approaching the maximal monotonicity of bifunctions via representative functions
\thanks{Research partially supported by DFG (German Research Foundation), project WA 922/1-3.}}
\author{Radu Ioan Bo\c t
\thanks {Faculty of Mathematics, Chemnitz University of Technology,
D-09107 Chemnitz, Germany, e-mail:
bot@mathematik.tu-chemnitz.de.} \and Sorin-Mihai Grad \thanks
{Faculty of Mathematics, Chemnitz University of Technology,
D-09107 Chemnitz, Germany, e-mail:
grad@mathematik.tu-chemnitz.de.}}
 \date{ }
\begin{document}
 \maketitle
\textbf{Abstract.} We provide an approach to maximal monotone bifunctions based on the theory of representative functions. Thus we extend
to nonreflexive Banach spaces recent results due to A.N. Iusem and, respectively, N. Hadjisavvas and H. Khatibzadeh, where sufficient conditions guaranteeing the maximal monotonicity of bifunctions were introduced. New results involving the sum of two monotone bifunctions are also presented.\\

\textbf{Keywords.} conjugate functions, subdifferentials, representative functions, maximal monotone bifunctions, maximal monotone operators\\

\textbf{AMS mathematics subject classification.} 47H05; 42A50; 90C25.\\

\section{Introduction and preliminaries}

The monotone bifunctions were so far studied by means of equilibrium problems in papers like \cite{HK, ACR, IU}, following the path initiated in the seminal paper \cite{BO}, these contributions establishing different connections they share with the monotone operators. On the other hand, after a flourishing era in the seventies, the study of the monotone operators was resurrected by the rediscovery of the Fitzpatrick function and the introduction of to it related representative functions, which allowed convex analysis to jump aboard in order to unveil new and interesting results especially concerning their maximality, but not only (cf. \cite{PZ, SS, BS, MS, BGW, BGw, BC}).

To the best of our knowledge, the representative functions were until now not invited to bring their contribution to the ongoing investigations on the maximal monotone bifunctions and with this paper we open them the gate, by showing the immense potential they posses in order to deal with the current issues in this research area. More precisely, we attach a representative function to the monotone operator associated to a monotone bifunction in \cite{IU, HK, ACR} and using it we obtain different results involving (maximal) monotone bifunctions in both reflexive and nonreflexive Banach spaces.

We begin by extending to general Banach spaces the statements from \cite{HK, IU} where sufficient conditions that guarantee the maximal monotonicity of a bifunction were proposed. When the space we work with is taken reflexive, the mentioned results from the literature are rediscovered via easier proofs that involve representative functions and convex analysis techniques and do not require renorming arguments as done in the original papers. Then we deal with the sum of two (maximal) monotone bifunctions.

\subsection{Elements of convex analysis}

Let the separated locally convex space $X$ and its continuous dual space $X^*$. By $\langle x^*, x\rangle$ we denote the value of the
linear continuous functional $x^*\in X^*$ at $x\in X$. Moreover, consider the \textit{coupling function} $c:X\times X^*\rightarrow \R$,
$c(x, x^*)=\langle x^*, x\rangle$. Denote the \textit{indicator} function of $U\subseteq X$ by $\delta_U$ and its \textit{interior}, \textit{conical hull} and \textit{closure}  by $\inte(U)$, $\cone(U)$, and $\cl(U)$, respectively. Moreover, if $U$ is convex its \textit{strong quasi relative interior} is
$$\sqri(U)=\big\{x \in U: \cone(U-x) \mbox{ is a closed linear subspace} \big\}.$$

For a function $f:X\rightarrow \overline\R = \R\cup \{ \pm\infty\}$, we denote its
\textit{domain} by $\dom f=\{x\in X: f(x)< +\infty\}$.
We call $f$ \textit{proper} if $f(x)>-\infty$ for all $x\in X$ and
$\dom f\neq\emptyset$. The \textit{conjugate} function of $f$ is
$f^*:X^*\rightarrow \overline\R$, $f^*(x^*)=\sup\big\{\langle
x^*, x\rangle - f(x) : x\in X\big\}$.
For $x\in X$ such that $f(x)\in \R$ we define the
\textit{(convex) subdifferential} of $f$ at $x$ by $\partial f (x)=\{x^*\in
X^*: f(y)-f(x)\geq \langle x^*, y-x\rangle\ \forall y\in X\}$. When $f(x)\notin \R$ we take by convention $\partial f(x)=\emptyset$.

Between a function and its conjugate there is \textit{Young's inequality} $f^*(x^*)+f(x)\geq \langle x^*, x\rangle$ for all $x\in X$ and all $x^*\in X^*$,
fulfilled as equality by a pair $(x, x^*)\in X\times X^*$ if and
only if $x^*\in \partial f(x)$. Denote also by $\overline{\co} f:X\rightarrow \overline\R$ the
largest convex and lower semicontinuous function everywhere less than or equal to $f$, i.e. the
\textit{lower semicontinuous convex hull} of $f$. The function $f$ is called \textit{upper hemicontinuous} if it is upper semicontinuous on line segments.

Let $A$ and $B$ be two nonempty sets. When $f, g:A\times B\rightarrow \overline\R$ are proper, we consider the function
$f\square_2 g:A\times B\rightarrow \overline\R$, $f\square_2 g(a, b)=\inf
\{f(a, c)+g(a, b-c): c\in B\}$. Denote also by $f^\top$ the \textit{transpose} of $f$,
namely the function $f^\top:B\times A\rightarrow \overline \R, f^\top(b,a)=f(a,b)$ for all $(b,a)\in B\times A$. Moreover, we consider the \textit{projection function} $\pr _A:A\times B\rightarrow A$, defined by $\pr_A(a, b)=a$ for all $(a, b)\in A\times B$.

Take further $X$ to be a normed space. Then $X^*$ is its topological dual and $X^{**}$ its topological \textit{bidual}.
We identify $X$ with its image under the canonical injection of $X$ into subspace of $X^{**}$.
Denote the open ball centered in $x\in X$ having the radius $\varepsilon>0$ by ${\cal B}(x, \varepsilon)$.
If $f:X\rightarrow \overline \R$, one can attach to it also the \textit{biconjugate} function $f^{**}:X^{**}\rightarrow\overline\R$ defined by $f^{**}(x^{**})=\sup_{x^*\in X^*} \{\langle x^{**}, x^*\rangle - f^*(x^*)\}$. Note that if $f$ is proper, convex and lower semicontinuous one has the \textit{Fenchel-Moreau} formula $f(x)=f^{**}(x)$ for all $x\in X$.

\subsection{Monotone operators}

Now let us present some basic things on monotone operators come next, following \cite{MS, SS}. Further $X$ is taken to be a nontrivial real Banach space.
A multifunction $T:X \rightrightarrows X^*$ is called a
\textit{monotone operator} provided that for any $x, y \in X$ one
has $\langle y^*-x^*, y-x\rangle \geq 0$ whenever $x^*\in T(x)$
 and $y^*\in T(y)$. The \textit{domain} of $T$ is $D(T)=\{x\in X: T(x)\neq\emptyset\}$, while its
\textit{range} is $R(T)=\cup \{T(x): x\in X\}$.

A monotone operator $T:X
\rightrightarrows X^*$ is called \textit{maximal} when its
\textit{graph} $G(T)=\{(x, x^*)\in X\times X^*: x^*\in T(x)\}$ is not
properly included in the graph of any other monotone operator $S:
X \rightrightarrows X^*$. The subdifferential of a proper, convex and
lower semicontinuous function on $X$ is a typical example of a maximal monotone operator.

To a monotone operator $T:X \rightrightarrows X^*$ one can attach the
 \textit{Fitzpatrick function}
$$\varphi _T:X\times X^*\rightarrow \overline\R,\ \varphi
_T(x, x^*)=\sup\big\{\langle y^*, x\rangle + \langle x^*, y\rangle
- \langle y^*, y\rangle:y^*\in Ty\big\},$$
which is convex and weak-weak$^*$ lower semicontinuous.

The function $\psi_T:=\overline{\co} (c+\delta_{G(T)})$, where the closure is considered in the strong topology, is very well connected to
the Fitzpatrick function. On $X\times X^*$ we
have $\psi_T^{*\top}=\varphi_T$ and, when $X$ is a reflexive
Banach space, one also has $\varphi_T^{*\top}=\psi_T$.

If $T$ is maximal monotone, then $\varphi_T\geq c$ and $G(T)=\{(x,x^*)\in X\times
X^*:\varphi_T(x,x^*)=\langle x^*,x\rangle\}$. These properties of the Fitzpatrick function motivate attaching to monotone operators other functions, as follows.
If $T$ is a monotone operator, a convex  and strong lower semicontinuous function $h_T:X\times
X^*\rightarrow \overline \R$ fulfilling $h_T\geq c$  and $G(T)\subseteq\{(x,x^*)\in X\times
X^*:h_T(x,x^*)= c(x, x^*)\}$ is said to be a \textit{representative function} of $T$.

Note that if $G(T)\neq\emptyset$ (in particular if $T$ is maximal monotone), then every representative function of $T$ is
proper. It follows immediately that $\varphi_T$ and $\psi_T$ are
representative functions of the maximal monotone operator $T$, too. Some properties of maximal monotone operators and representative functions attached to them that we need further in this paper follow.

\begin{lemma}\label{le1} (cf. \cite{BS})  Let $T:X\rightrightarrows X^*$ be a maximal monotone
operator and $h_T$ a representative function of $T$.
Then
\begin{enumerate}\item[(i)]$\varphi_T(x,x^*)\leq
h_T(x,x^*)\leq \psi_T(x,x^*)$ for all $(x,x^*)\in X\times
X^*$;\item[(ii)]the restriction of $h_T^{*\top}$ to
$X\times X^*$ is also a representative function of
$T$;\item[(iii)] $\big\{(x,x^*)\in X\times X^*:h_T(x,x^*)=c(x, x^*)\big\}=\big\{(x,x^*)\in X\times X^*:h_T^{*\top}(x,x^*)=c(x, x^*)\big\}=G(T)$.\end{enumerate}
\end{lemma}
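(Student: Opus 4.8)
The plan is to isolate one pointwise inequality relating $h_T$ to the points of $G(T)$ and to read all three assertions off it, together with the fact, valid because $T$ is maximal monotone, that $\varphi_T\geq c$ and $G(T)=\{(x,x^*):\varphi_T(x,x^*)=\langle x^*,x\rangle\}$. First I would prove
$$ h_T(x,x^*)\geq\langle y^*,x\rangle+\langle x^*,y\rangle-\langle y^*,y\rangle\qquad\text{for all }(x,x^*)\in X\times X^*,\ (y,y^*)\in G(T). \qquad(\ast) $$
If $h_T(x,x^*)=+\infty$ this is trivial, and $h_T(x,x^*)>-\infty$ since $h_T$ is proper (as $G(T)\neq\emptyset$), so one may assume $h_T(x,x^*)\in\R$. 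For $t\in(0,1)$ put $z_t=(tx+(1-t)y,\,tx^*+(1-t)y^*)$. Convexity of $h_T$ together with $h_T(y,y^*)=\langle y^*,y\rangle$ yields $h_T(z_t)\leq th_T(x,x^*)+(1-t)\langle y^*,y\rangle$, whereas $h_T\geq c$ yields $h_T(z_t)\geq\langle tx^*+(1-t)y^*,\,tx+(1-t)y\rangle$. Expanding the second bound, subtracting, dividing by $t>0$ and letting $t\downarrow0$ gives exactly $(\ast)$.

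\textbf{Assertion (i) and the first equality in (iii).} Taking the supremum in $(\ast)$ over $(y,y^*)\in G(T)$ gives $\varphi_T\leq h_T$ on $X\times X^*$. On the other hand $h_T\leq c+\delta_{G(T)}$, because $h_T$ equals $c$ on $G(T)$ and $c+\delta_{G(T)}=+\infty$ elsewhere; since $h_T$ is convex and strong lower semicontinuous this forces $h_T\leq\overline{\co}(c+\delta_{G(T)})=\psi_T$, which is (i). Now suppose $h_T(x_0,x_0^*)=\langle x_0^*,x_0\rangle$. By (i) and $\varphi_T\geq c$ we get $\langle x_0^*,x_0\rangle=h_T(x_0,x_0^*)\geq\varphi_T(x_0,x_0^*)\geq\langle x_0^*,x_0\rangle$, hence $\varphi_T(x_0,x_0^*)=\langle x_0^*,x_0\rangle$, so $(x_0,x_0^*)\in G(T)$. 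Combined with the inclusion $G(T)\subseteq\{(x,x^*):h_T(x,x^*)=c(x,x^*)\}$ from the definition of a representative function, this gives $\{(x,x^*):h_T(x,x^*)=c(x,x^*)\}=G(T)$.

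\textbf{Assertion (ii) and the second equality in (iii).} Let $g$ denote the restriction of $h_T^{*\top}$ to $X\times X^*$. As $h_T^*$ is convex and lower semicontinuous on $X^*\times X^{**}$, transposing it (a permutation of the coordinates) and restricting to the subspace $X\times X^*$ of $X^{**}\times X^*$ keeps $g$ convex and strong lower semicontinuous. From $h_T\leq\psi_T$ we get $h_T^*\geq\psi_T^*$, whence for $(x,x^*)\in X\times X^*$, using $\psi_T^{*\top}=\varphi_T$ on $X\times X^*$ and $\varphi_T\geq c$, $$g(x,x^*)=h_T^*(x^*,x)\geq\psi_T^*(x^*,x)=\psi_T^{*\top}(x,x^*)=\varphi_T(x,x^*)\geq c(x,x^*).$$ To check $g=c$ on $G(T)$, fix $(y,y^*)\in G(T)$: the last display gives $g(y,y^*)\geq\varphi_T(y,y^*)=\langle y^*,y\rangle$, while rewriting $(\ast)$ as $\langle y^*,x\rangle+\langle x^*,y\rangle-h_T(x,x^*)\leq\langle y^*,y\rangle$ and taking the supremum over $(x,x^*)\in X\times X^*$ gives $h_T^*(y^*,y)\leq\langle y^*,y\rangle$, i.e. $g(y,y^*)\leq\langle y^*,y\rangle$. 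Hence $g$ is a representative function of $T$, which is (ii); and, since $g\geq\varphi_T$ and $G(T)\subseteq\{(x,x^*):g(x,x^*)=c(x,x^*)\}$, the argument of the previous paragraph applied to $g$ yields $\{(x,x^*):g(x,x^*)=c(x,x^*)\}=G(T)$, completing (iii).

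\textbf{Main obstacle.} The single genuinely delicate point is $(\ast)$: it cannot be obtained from $h_T\geq c$ alone but requires combining it with the exact equality $h_T=c$ on $G(T)$ and convexity, and the clean way to squeeze out the bound is the asymptotics $t\downarrow0$ along the segment joining $(x,x^*)$ to a graph point. Everything afterwards is routine conjugate bookkeeping — one only has to be a little careful about the bidual so that the transposition and restriction in (ii) are legitimate — together with the already recorded properties of $\varphi_T$ and $\psi_T$ for maximal monotone $T$.
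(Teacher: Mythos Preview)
The paper does not prove this lemma: it is stated with the attribution ``(cf.\ \cite{BS})'' and no argument is supplied, so there is no in-paper proof to compare against. Your proof is correct and self-contained; the key device---the pointwise inequality $(\ast)$ obtained by interpolating along the segment from a graph point $(y,y^*)$ to an arbitrary $(x,x^*)$, using convexity together with $h_T\geq c$ and $h_T(y,y^*)=c(y,y^*)$, then letting $t\downarrow 0$---is precisely the argument in the cited source (Burachik--Svaiter), so you have essentially reconstructed the original proof. The deductions of (i)--(iii) from $(\ast)$ via the already recorded facts $\varphi_T\geq c$, $G(T)=\{\varphi_T=c\}$ and $\psi_T^{*\top}=\varphi_T$ are clean and accurate; the only cosmetic remark is that, once $(\ast)$ is in hand, the upper bound $g(y,y^*)\leq\langle y^*,y\rangle$ already follows without invoking $g\geq\varphi_T$, so the detour through $\varphi_T$ in the proof of (ii) is not strictly needed for the equality $g=c$ on $G(T)$.
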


Now let us give two maximality criteria for monotone operators.

\begin{theorem}\label{th2}
(cf. \cite[Proposition 2.1]{PZ}) Let $X$ be reflexive. If $h: X\times X^* \rightarrow \overline \R$ is a proper, convex and lower semicontinuous function with $h\geq c$, then the monotone operator $\{(x,x^*)\in X\times X^*:h(x,x^*)=c(x, x^*)\}$ is maximal if and only if $h^{*\top}\geq c$.
\end{theorem}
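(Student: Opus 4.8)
The plan is to set $M:=\{(x,x^*)\in X\times X^*:h(x,x^*)=c(x,x^*)\}$, to record first that $M$ is monotone, and then to treat the two implications. Monotonicity is immediate: for $(x,x^*),(y,y^*)\in G(M)$ the convexity of $h$ gives $h\big(\tfrac{x+y}2,\tfrac{x^*+y^*}2\big)\le\tfrac12\langle x^*,x\rangle+\tfrac12\langle y^*,y\rangle$, while $h\ge c$ gives $h\big(\tfrac{x+y}2,\tfrac{x^*+y^*}2\big)\ge\big\langle\tfrac{x^*+y^*}2,\tfrac{x+y}2\big\rangle$, and rearranging yields $\langle y^*-x^*,y-x\rangle\ge 0$. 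For the implication ``$M$ maximal monotone $\Rightarrow h^{*\top}\ge c$'' I would note that $h$ is itself a representative function of $M$: it is convex and lower semicontinuous, $h\ge c$, and $G(M)\subseteq\{h=c\}$ holds by the very definition of $M$ (and $G(M)\ne\emptyset$ since $M$ is maximal monotone). Since $X$ is reflexive, $X^{**}=X$ and $h^{*\top}$ is a function on $X\times X^*$; by Lemma~\ref{le1}(ii) it is again a representative function of $M$, and representative functions dominate $c$, so $h^{*\top}\ge c$.

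The substantial direction is ``$h^{*\top}\ge c\Rightarrow M$ maximal monotone''. Here I would prove the stronger fact that $R\big(M+J(\cdot-w)\big)=X^*$ for every $w\in X$, where $J=\partial\big(\tfrac12\|\cdot\|^2\big)$ is the (set-valued) duality mapping, and then close by Minty's argument. Fix $w\in X$ and $z^*\in X^*$, and put $g(x,x^*):=\tfrac12\|x-w\|^2+\tfrac12\|x^*\|^2+\langle x^*,w\rangle$. A short verification shows that $g$ is a representative function of $J(\cdot-w)$, that $g\ge c$, that $g$ is finite and continuous on all of $X\times X^*$, and that $g^{*\top}=g$. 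Form $p:=h\,\square_2\,g$. Then $p$ is proper, convex, lower semicontinuous, $p\ge c$ (since $h\ge c$ and $g\ge c$), and — by the conjugation formula for the partial infimal convolution, legitimate here because $g$ is continuous — $p^{*\top}=h^{*\top}\,\square_2\,g^{*\top}=h^{*\top}\,\square_2\,g\ge c$. Because $g(x,\cdot)$ is coercive the infimum defining $p$ is attained, and a direct computation then identifies $\{p=c\}$ with the graph of $M+J(\cdot-w)$.

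Next, set $\Theta(x,c):=h(x,c)+g(x,z^*-c)-\langle z^*,x\rangle$; by construction $\inf_c\Theta(x,c)=p(x,z^*)-\langle z^*,x\rangle\ge 0$, so $\Theta\ge 0$. Applying Fenchel duality to the convex function $(x,x^*)\mapsto p(x,x^*)-\langle z^*,x\rangle$ (minimizing first in $c$) one gets $\inf_{(x,c)}\Theta(x,c)=-\inf_{y}\big[p^{*\top}(y,z^*)-\langle z^*,y\rangle\big]$, and since $p^{*\top}\ge c$ the right-hand side is $\le 0$; together with $\Theta\ge 0$ this forces $\inf_{(x,c)}\Theta(x,c)=0$. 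By the Br{\o}ndsted--Rockafellar theorem the infimum is attained at some $(\bar x,\bar c)$, and from $\Theta(\bar x,\bar c)=0$ with $h\ge c$, $g\ge c$ one reads off simultaneously $h(\bar x,\bar c)=\langle\bar c,\bar x\rangle$ and $g(\bar x,z^*-\bar c)=\langle z^*-\bar c,\bar x\rangle$, i.e. $(\bar x,\bar c)\in G(M)$ and $z^*-\bar c\in J(\bar x-w)$; hence $z^*\in\big(M+J(\cdot-w)\big)(\bar x)$, which establishes the surjectivity. Finally, if $(x_0,x_0^*)$ is monotonically related to $G(M)$, use this with $w=x_0$ and $z^*=x_0^*$: there are $\bar x$ and $\bar c\in M(\bar x)$ with $x_0^*-\bar c\in J(\bar x-x_0)$, so $\langle x_0^*-\bar c,\bar x-x_0\rangle=\|\bar x-x_0\|^2$; but monotone relatedness gives $\langle x_0^*-\bar c,\bar x-x_0\rangle\le 0$, whence $\bar x=x_0$, then $x_0^*-\bar c\in J(0)=\{0\}$, and so $x_0^*=\bar c\in M(x_0)$, i.e. $(x_0,x_0^*)\in G(M)$; thus $M$ is maximal.

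I expect the attainment step to be the main obstacle: $\Theta$ is bounded below by $0$ with infimum $0$, yet it is not coercive — along the ``saddle'' directions of the variable $c$ the quadratic growth contributed by $g$ is exactly cancelled by the estimate $h\ge c$, so no minimizing sequence can be expected to be bounded — and the Br{\o}ndsted--Rockafellar theorem (which rests on the completeness of $X$) is what has to replace compactness; it is precisely at this point that the passage to a non-reflexive space would demand an additional assumption. The remaining technical points, namely the strong-duality equality used above and the conjugation identity $(h\,\square_2\,g)^{*\top}=h^{*\top}\,\square_2\,g^{*\top}$, are both secured by the everywhere-finiteness and continuity of $g$.
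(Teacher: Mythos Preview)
The paper does not supply a proof of this statement; it is quoted from \cite{PZ}. Evaluating your argument on its own: the forward implication is correct (if $M$ is maximal monotone then $h$ is a representative function of $M$ and Lemma~\ref{le1}(ii) gives $h^{*\top}\ge c$), and the overall strategy for the backward implication --- proving surjectivity of $M+J(\cdot-w)$ and closing with Minty --- is sound.

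The gap is at the attainment step. You correctly set up $\Theta\ge 0$ and correctly identify non-coercivity as the obstacle, but the Br\o ndsted--Rockafellar theorem does \emph{not} deliver a minimizer: it only produces, for each $\varepsilon>0$, an approximate critical point near an $\varepsilon$-minimizer. Without a bounded minimizing sequence (which you do not have, precisely because of the non-coercivity you note), no minimizer can be extracted this way, so the sentence ``by the Br\o ndsted--Rockafellar theorem the infimum is attained'' is unjustified. The correct device in the reflexive case is a \emph{double} use of Fenchel--Rockafellar duality: after translating $(w,z^*)$ to the origin, set $\psi:=h+\tfrac12\|\cdot\|^2+\tfrac12\|\cdot\|^2$; then apply Fenchel duality to the problem $\inf\bigl(h^*+\tfrac12\|\cdot\|^2+\tfrac12\|\cdot\|^2\bigr)$, where the continuity of the quadratic part guarantees that the dual optimum is \emph{attained}. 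Reflexivity gives $h^{**}=h$, so that dual problem is exactly $-\min\psi$, and the hypothesis $h^{*\top}\ge c$ forces $\min\psi=0$ with the minimum attained; from the minimizer one reads off the desired point of $G(M)$. Your detour through $p=h\,\square_2\,g$ is not needed for this, and the asserted equality $\inf_{(x,c)}\Theta=-\inf_y\bigl[p^{*\top}(y,z^*)-\langle z^*,y\rangle\bigr]$ is itself a strong-duality statement that would require a qualification condition you have not supplied.
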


The following statement was formulated as \cite[Theorem 3.1]{MS} with the condition $0\in \sqri \big(\pr_X$ $(\dom h)\big)$. However, by translation arguments one can reformulate it
by considering the more general condition $\sqri \big(\pr_X(\dom h)\big)\neq \emptyset$, as follows.

\begin{theorem}\label{th3} Let $h: X\times X^* \rightarrow \overline \R$ be a proper and convex function with $h\geq c$ and $h^{*\top}\geq c$ on $X\times X^*$. If $\sqri \big(\pr_X(\dom h)\big)\neq \emptyset$, then the operator $\{(x,x^*)\in X\times X^*:h^*(x^*, x)=c(x, x^*)\}$ is maximal monotone.
\end{theorem}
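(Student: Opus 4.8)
The plan is to reduce the assertion to \cite[Theorem 3.1]{MS}, which delivers exactly this conclusion under the more restrictive normalization $0\in\sqri\big(\pr_X(\dom h)\big)$, by exploiting a translation in the $X$-variable together with the translation invariance of maximal monotonicity.

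First I would fix some $x_0\in\sqri\big(\pr_X(\dom h)\big)$ and introduce the shifted function $\tilde h:X\times X^*\rightarrow\overline\R$, $\tilde h(x,x^*)=h(x+x_0,x^*)-\langle x^*,x_0\rangle$. Being the composition of $h$ with an affine bijection of $X\times X^*$ plus a linear functional, $\tilde h$ is again proper and convex, and from $h\geq c$ one gets $\tilde h(x,x^*)\geq\langle x^*,x+x_0\rangle-\langle x^*,x_0\rangle=\langle x^*,x\rangle$, that is $\tilde h\geq c$. Since $\dom\tilde h=\dom h-(x_0,0)$, one has $\pr_X(\dom\tilde h)=\pr_X(\dom h)-x_0$ and hence $0\in\sqri\big(\pr_X(\dom\tilde h)\big)$.

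The key step is the conjugate computation: working with the pairing of $X^*\times X^{**}$ with $X\times X^*$ and performing the substitution $u=x+x_0$, one obtains
$$\tilde h^*(x^*,x^{**})=h^*(x^*,x^{**}+x_0)-\langle x^*,x_0\rangle\quad\text{for all }(x^*,x^{**})\in X^*\times X^{**}.$$
In particular, restricting to $X^*\times X$, $\tilde h^{*\top}(x,x^*)=h^{*\top}(x+x_0,x^*)-\langle x^*,x_0\rangle$, so $h^{*\top}\geq c$ yields $\tilde h^{*\top}(x,x^*)\geq\langle x^*,x+x_0\rangle-\langle x^*,x_0\rangle=\langle x^*,x\rangle$, i.e. $\tilde h^{*\top}\geq c$. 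Thus $\tilde h$ fulfills all the hypotheses of \cite[Theorem 3.1]{MS}, and consequently the operator $T_{\tilde h}:=\big\{(x,x^*)\in X\times X^*:\tilde h^*(x^*,x)=c(x,x^*)\big\}$ is maximal monotone.

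It then remains to transfer this back to the operator $T:=\big\{(x,x^*)\in X\times X^*:h^*(x^*,x)=c(x,x^*)\big\}$ from the statement. From the displayed formula, $\tilde h^*(x^*,x)=\langle x^*,x\rangle$ is equivalent to $h^*(x^*,x+x_0)=\langle x^*,x+x_0\rangle$, whence $G(T)=G(T_{\tilde h})+(x_0,0)$. Since the map $(x,x^*)\mapsto(x+x_0,x^*)$ is a bijection of $X\times X^*$ that preserves the monotone relation as well as graph inclusions, $T$ inherits maximal monotonicity from $T_{\tilde h}$, which is the claim. I expect the only mildly delicate point to be the choice of the translation: the subtraction of $\langle x^*,x_0\rangle$ in the definition of $\tilde h$ is precisely what keeps the two inequalities $\tilde h\geq c$ and $\tilde h^{*\top}\geq c$ and the normalization of the $\sqri$-condition valid simultaneously; after that, everything is bookkeeping.
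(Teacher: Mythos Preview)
Your proof is correct and is precisely the ``translation argument'' the paper alludes to when it reduces Theorem~\ref{th3} to \cite[Theorem 3.1]{MS}; you have simply written out in full the shift $\tilde h(x,x^*)=h(x+x_0,x^*)-\langle x^*,x_0\rangle$ and verified that it carries over all the hypotheses and the conclusion. There is nothing to add.
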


\subsection{Monotone bifunctions}

Let us also present some preliminaries on bifunctions, following \cite{HK, IU}. Let the nonempty set $C\subseteq X$. A function $F:C\times C\rightarrow \R$ is called \textit{bifunction}. The bifunction $F$ is called \textit{monotone} if $F(x, y) + F(y, x)\leq 0$ for all $x, y\in C$. To the bifunction $F$ one can attach the operators $A^F:X\rightrightarrows X^*$ and ${^F}\!\!A:X\rightrightarrows X^*$ defined by
$$A^F(x)=\left\{
\begin{array}{ll}
\{x^*\in X^*: F(x, y)-F(x, x)\geq \langle x^*, y-x\rangle\ \forall y\in C\},& \mbox{ if } x\in C,\\
\emptyset,& \mbox{ otherwise},
\end{array}\right.$$
and, respectively,
$${^F}\!\!A(x)=\left\{
\begin{array}{ll}
\{x^*\in X^*: F(x, x)-F(y, x)\geq \langle x^*, y-x\rangle\ \forall y\in C\},& \mbox{ if } x\in C,\\
\emptyset,& \mbox{ otherwise},
\end{array}\right.
$$
which are monotone when $F(x, x)=0$ for all $x\in C$ and $F$, respectively $-F$, is monotone. Actually, when $F$ is monotone one has $G\big(A^F\big)\subseteq G\big({^F}\!\!A\big)$.

The monotone bifunction $F$ is said to be \textit{maximal monotone} if $A^F$ is maximal monotone and, respectively, \textit{$BO$-maximal monotone} (where $BO$ stands for Blum-Oettli, as this type of monotone bifunction was introduced in \cite{BO}) when for every $(x, x^*)\in C\times X^*$ it holds
$$F(y, x)+\langle x^*, y-x\rangle\leq 0\ \forall y\in C\ \Rightarrow  F(x, y)\geq \langle x^*, y-x\rangle\ \forall y\in C.$$
When $F$ is monotone, note that its $BO$-maximal monotonicity is equivalent to ${^F}\!\!A= A^F$. Thus any maximal monotone bifunction is $BO$-maximal monotone, but the opposite implication is not always valid, as the situation in \cite[Example 2.2]{HK} shows.

A bifunction $F$ is called \textit{locally bounded at a point $\bar x\in X$} if there exists an $\varepsilon >0$ and a $k\in \R$ such that $F(x, y)\leq k$ for all $x, y\in C\cap B(\bar x, \varepsilon)$. In order not to overcomplicate the paper, when $x\in C$ we denote by a slight abuse of notation by $F(x, \cdot) + \delta_C$ the function defined on $X$ with extended real values which is equal to $F(x, \cdot)$ on $C$ and takes the value $+\infty$ otherwise. Then, $A^F(x)=\partial (F(x, \cdot) + \delta_C)(x)$ for all $x\in X$.

We close the section by presenting a statement which holds in a more general framework than originally considered in \cite[Lemma 3]{BO}.

\begin{lemma}\label{le4} Let $F$ and $G$ be two bifunctions defined on the nonempty, convex and closed set $C\subseteq X$, satisfying $F(x, x)=G(x, x)=0$ for all $x\in C$, such that $F$ is monotone, $F(x, \cdot)$ and $G(x, \cdot)$ are convex for all $x\in C$ and $F(\cdot, y)$ is upper hemicontinuous for all $y\in C$. Then the following statements are equivalent
\begin{enumerate}
\item[(i)] $\bar x\in C$ and $F(y, \bar x)\leq G(\bar x, y)$ for all $y\in C$;
\item[(ii)] $\bar x\in C$ and $0\leq F(\bar x, y)+ G(\bar x, y)$  for all $y\in C$.
\end{enumerate}
\end{lemma}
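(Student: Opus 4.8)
The plan is to prove the two implications separately. The direction $(ii)\Rightarrow(i)$ is a one-line consequence of the monotonicity of $F$: assuming $\bar x\in C$ and $0\leq F(\bar x,y)+G(\bar x,y)$ for all $y\in C$, monotonicity gives $F(\bar x,y)\leq -F(y,\bar x)$, so substituting into the hypothesis yields $0\leq -F(y,\bar x)+G(\bar x,y)$, i.e.\ $F(y,\bar x)\leq G(\bar x,y)$ for all $y\in C$. Note that convexity and upper hemicontinuity are not needed here, nor is $G(x,x)=0$.

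The substantial direction is $(i)\Rightarrow(ii)$, which I would handle by a convexity-along-segments plus limiting argument, exactly in the spirit of \cite[Lemma 3]{BO}. Fix $y\in C$ and, for $t\in(0,1)$, set $y_t=(1-t)\bar x+ty$; since $C$ is convex, $y_t\in C$, so (i) applies at the admissible point $y_t$ and gives $F(y_t,\bar x)\leq G(\bar x,y_t)$. On the right-hand side, convexity of $G(\bar x,\cdot)$ together with $G(\bar x,\bar x)=0$ yields $G(\bar x,y_t)\leq (1-t)G(\bar x,\bar x)+tG(\bar x,y)=tG(\bar x,y)$. On the left-hand side, convexity of $F(y_t,\cdot)$ applied to the identity $0=F(y_t,y_t)=F\big(y_t,(1-t)\bar x+ty\big)$ gives $(1-t)F(y_t,\bar x)+tF(y_t,y)\geq 0$, hence $F(y_t,\bar x)\geq -\tfrac{t}{1-t}F(y_t,y)$. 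Chaining the three estimates produces $-\tfrac{t}{1-t}F(y_t,y)\leq tG(\bar x,y)$, that is, $-F(y_t,y)\leq (1-t)G(\bar x,y)$ for every $t\in(0,1)$.

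Now let $t\downarrow 0$: the points $y_t$ move along the segment $[\bar x,y]$ toward $\bar x$, so the upper hemicontinuity of $F(\cdot,y)$ (upper semicontinuity of $t\mapsto F(y_t,y)$ on $[0,1]$) gives $\limsup_{t\downarrow 0}F(y_t,y)\leq F(\bar x,y)$, equivalently $-F(\bar x,y)\leq \liminf_{t\downarrow 0}\big(-F(y_t,y)\big)$. Passing to $\liminf$ in the inequality $-F(y_t,y)\leq(1-t)G(\bar x,y)$ then yields $-F(\bar x,y)\leq \lim_{t\downarrow 0}(1-t)G(\bar x,y)=G(\bar x,y)$, i.e.\ $0\leq F(\bar x,y)+G(\bar x,y)$; since $y\in C$ was arbitrary, (ii) follows.

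The one point that needs care is this final limiting step: one must apply upper hemicontinuity in the correct direction (so that $\limsup$ of $F(y_t,y)$ is bounded \emph{above} by $F(\bar x,y)$) and verify that taking $\liminf$ preserves the inequality $-F(y_t,y)\leq(1-t)G(\bar x,y)$. Everything else — the three convexity inequalities and their chaining — is routine, and the convexity of $C$ is exactly what guarantees that the intermediate points $y_t$ are legitimate arguments of $F$ and $G$; closedness of $C$ is not actually used in this argument.
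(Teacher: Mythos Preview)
Your proof is correct and is precisely the classical Blum--Oettli argument that the paper invokes (the paper does not give its own proof of this lemma but refers to \cite[Lemma~3]{BO}). Your observations that $(ii)\Rightarrow(i)$ uses only the monotonicity of $F$ and that $(i)\Rightarrow(ii)$ needs neither monotonicity nor the closedness of $C$ match the paper's own remark following the lemma.
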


Note that the monotonicity of $F$ is required only for proving the implication ``$(ii)\Rightarrow(i)$'', which actually holds even if the convexity and topological hypotheses are removed. Using Lemma \ref{le4} we prove another statement which will be useful later.

\begin{lemma}\label{le0} Let $F$ be a bifunction defined on the nonempty, convex and closed set $C\subseteq X$, satisfying $F(x, x)=0$ for all $x\in C$. If $F(x, \cdot)$ is convex for all $x\in C$ and $F(\cdot, y)$ is upper hemicontinuous for all $y\in C$, then $G\big({^F}\!\!A\big)\subseteq G\big(A^F\big)$.
\end{lemma}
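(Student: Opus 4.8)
The plan is to obtain this inclusion as an immediate application of Lemma~\ref{le4}, taking the second bifunction to encode the given linear functional. First I would rewrite the two memberships as inequality systems: since $F(x,x)=0$, the relation $x^*\in{^F}\!\!A(x)$ says that $x\in C$ and
\[
F(y,x)+\langle x^*,y-x\rangle\le 0\quad\text{for all }y\in C,
\]
whereas $x^*\in A^F(x)$ says that $x\in C$ and $F(x,y)\ge\langle x^*,y-x\rangle$ for all $y\in C$. So the claim reduces to showing that the first system forces the second.

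Next, fix $(x,x^*)\in G({^F}\!\!A)$ and introduce $G:C\times C\to\R$, $G(u,v)=\langle x^*,u-v\rangle$. Then $G(u,u)=0$ for every $u\in C$ and $G(u,\cdot)$ is affine, hence convex; no continuity assumption on $G$ is needed, because in Lemma~\ref{le4} only $F(\cdot,y)$ has to be upper hemicontinuous, which is part of our hypotheses. With this choice, ``$x^*\in{^F}\!\!A(x)$'' is precisely statement (i) of Lemma~\ref{le4} with $\bar x=x$, namely $F(y,x)\le G(x,y)$ for all $y\in C$. Applying the implication $(i)\Rightarrow(ii)$ of that lemma — which, as recorded in the remark following it, does not rely on the monotonicity of $F$, and is therefore legitimate here, where $F$ is not assumed monotone — we obtain $0\le F(x,y)+G(x,y)=F(x,y)-\langle x^*,y-x\rangle$ for all $y\in C$, i.e.\ $x^*\in A^F(x)$. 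Since $(x,x^*)$ was arbitrary, $G({^F}\!\!A)\subseteq G(A^F)$.

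The one point that deserves care — and the only real obstacle I foresee — is the legitimacy of invoking the $(i)\Rightarrow(ii)$ half of Lemma~\ref{le4} without monotonicity; this is exactly what the remark after that lemma guarantees, so I would simply cite it. If one prefers a self-contained argument, the underlying computation is short: for fixed $y\in C$ and $t\in(0,1]$ put $y_t=(1-t)x+ty\in C$, apply the hypothesis at $y_t$, combine $F(y_t,y_t)=0$ with the convexity of $F(y_t,\cdot)$ and the affinity of $G(x,\cdot)$ (with $G(x,x)=0$) to deduce $F(y_t,y)\ge(1-t)\langle x^*,y-x\rangle$, and then let $t\to 0^+$, using the upper semicontinuity of $F(\cdot,y)$ along the segment $[x,y]$ to pass to the limit. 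Either way, the only things to watch are the bookkeeping of signs in the reduction and the verification that the auxiliary pair $(F,G)$ satisfies the hypotheses of Lemma~\ref{le4}.
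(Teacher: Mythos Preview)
Your argument is correct and follows exactly the paper's route: both apply Lemma~\ref{le4}$(i)\Rightarrow(ii)$ with the auxiliary bifunction $G(u,v)=\langle x^*, u-v\rangle$ (the paper leaves this choice implicit), relying on the remark that this direction does not require monotonicity of $F$. Your explicit identification of $G$ and the optional self-contained limiting argument along $y_t=(1-t)x+ty$ are sound additions but do not change the approach.
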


\begin{proof} Let $(x, x^*)\in G\big({^F}\!\!A\big)$. Then
$x\in C$ and $F(y, x)\leq \langle x^*, x-y\rangle$ for all $y\in C$. By Lemma
\ref{le4}$(i)\Rightarrow(ii)$ one gets $0\leq F(x, y) + \langle x^*, x-y\rangle$ for all $y\in C$, thus $(x, x^*)\in G\big(A^F\big)$.
\end{proof}

\section{Maximality of monotone bifunctions}

In this section let $X$ be, unless otherwise stated, a nontrivial real Banach space, and let $C\subseteq X$ be a nonempty subset of it. Consider the bifunction $F:C\times C\rightarrow \R$ satisfying $F(x, x)=0$ for all $x\in C$. In order to deal with its maximal monotonicity, we attach to $F$ the following functions
$$h_F:X\times X^*\rightarrow \overline\R,\ h_F(x, x^*)= \sup_{y\in C} \big\{\langle x^*, y\rangle - F(x, y)\big\} + \delta_C(x) = (F(x, \cdot)+\delta_C)^*(x^*)+ \delta_C(x)$$
and
$$g_F:X\times X^*\rightarrow \overline\R,\ g_F(x, x^*)= \sup_{y\in C} \big\{\langle x^*, y\rangle + F(y, x)\big\} + \delta_C(x)= (-F(\cdot, x)+\delta_C)^*(x^*)+ \delta_C(x).$$
Note that for $(x, x^*)\in X\times X^*$ one has
$$h_F^*(x^*,x) = \sup_{y\in C}\big\{\langle x^*, y\rangle + (F(y, \cdot)+\delta_C)^{**}(x)\big\}$$ and
$$g_F^*(x^*,x) = \sup_{y\in C}\big\{\langle x^*, y\rangle + (-F(\cdot, y)+\delta_C)^{**}(x)\big\}.$$
Other properties of these functions are given in the following statement, whose proof is trivial.

\begin{lemma}\label{le5} For all $(x, x^*)\in X\times X^*$ it holds $g_F(x, x^*)\geq h_F^*(x^*, x)$, $h_F(x, x^*)\geq c(x, x^*)$ and $ g_F(x, x^*)\geq c(x, x^*)$. If $F$ is monotone, then $h_F(x, x^*)\geq  g_F(x, x^*)$ for all $(x, x^*)\in X\times X^*$.
\end{lemma}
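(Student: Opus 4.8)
The plan is to verify the four inequalities in turn, in each case distinguishing whether $x\in C$ or not. If $x\notin C$, the additive term $\delta_C(x)=+\infty$ forces $h_F(x,x^*)=g_F(x,x^*)=+\infty$, so every assertion having $h_F$ or $g_F$ on the left becomes trivial; in particular the last claim is immediate in that case. Hence the real content is the case $x\in C$, and there I intend to work straight from the defining suprema, using only $F(x,x)=0$ and, for the last claim, monotonicity.

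For $h_F(x,x^*)\ge c(x,x^*)$ and $g_F(x,x^*)\ge c(x,x^*)$, with $x\in C$, I would plug $y=x$ into the respective suprema: since $F(x,x)=0$, the bracketed term equals $\langle x^*,x\rangle$ in both, so each supremum is at least $\langle x^*,x\rangle=c(x,x^*)$. For the last claim, monotonicity gives $F(y,x)\le -F(x,y)$ for all $x,y\in C$; hence, for $x\in C$, the bracket $\langle x^*,y\rangle+F(y,x)$ defining $g_F$ is dominated termwise by the bracket $\langle x^*,y\rangle-F(x,y)$ defining $h_F$, and passing to the supremum over $y\in C$ yields $g_F(x,x^*)\le h_F(x,x^*)$.

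The only inequality that calls for a (short) computation is $g_F(x,x^*)\ge h_F^*(x^*,x)$, and again I may assume $x\in C$, since otherwise the left-hand side is $+\infty$. Reading $h_F^*$ as the conjugate of $h_F$ on $X\times X^*$ and identifying $x$ with its image in $X^{**}$,
$$h_F^*(x^*,x)=\sup_{(y,y^*)\in X\times X^*}\big\{\langle x^*,y\rangle+\langle y^*,x\rangle-h_F(y,y^*)\big\},$$
in which only $y\in C$ contributes, the summand being $-\infty$ for $y\notin C$. For such $y$, choosing $z=x\in C$ inside the supremum defining $h_F(y,y^*)$ gives $h_F(y,y^*)\ge\langle y^*,x\rangle-F(y,x)$, whence $\langle x^*,y\rangle+\langle y^*,x\rangle-h_F(y,y^*)\le\langle x^*,y\rangle+F(y,x)$; taking the supremum over $y\in C$ (the variable $y^*$ having disappeared) gives precisely $g_F(x,x^*)$. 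None of this presents a genuine obstacle --- consistent with the remark that the proof is trivial --- the one point worth attention being the bookkeeping of the conjugation-and-transposition conventions, i.e.\ that $h_F^*(x^*,x)$ denotes the conjugate of $h_F$ viewed as a function on $X\times X^*$, evaluated at the point $(x^*,x)$ of the copy of $X^*\times X$ sitting inside $(X\times X^*)^*$; the identity $h_F^*(x^*,x)=\sup_{y\in C}\{\langle x^*,y\rangle+(F(y,\cdot)+\delta_C)^{**}(x)\}$ recorded just before the lemma is what this last argument is essentially an abbreviated form of, since $(F(y,\cdot)+\delta_C)^{**}(x)\le F(y,x)$ for $x\in C$.
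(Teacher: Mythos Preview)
Your proof is correct and complete; each of the four inequalities is verified by the natural direct argument. The paper itself declares the proof trivial and omits it, so there is nothing to compare against beyond noting that your verification is exactly the straightforward computation the authors had in mind.
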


By construction, one has that $h_F(x, x^*)=c(x, x^*)$ if and only if $(x, x^*)\in G\big(A^F\big)$ and $g_F(x, x^*)=c(x, x^*)$ if and only if $(x, x^*)\in G\big({^F}\!\!A\big)$.
However, $g_F$ and $h_F$ are in general neither convex nor lower semicontinuous, therefore they are not always representative functions for $A^F$ in case this is monotone.
Now we are ready to give our main statements, where sufficient conditions for the maximal monotonicity of $A^F$ are provided. We begin with a theorem where $F$ is not even asked to be monotone.

\begin{theorem}\label{th6}  Let $C$ be convex and closed with $\sqri (C)\neq \emptyset$, $F(x, \cdot)$ convex and lower semicontinuous for all $x\in C$ and $F(\cdot, y)$ concave upper semicontinuous for all $y\in C$. Then $A^F$ is maximal monotone and $A^F={^F}\!\!A$.
\end{theorem}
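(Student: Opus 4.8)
The strategy is to apply the maximality criterion of Theorem \ref{th3} to a suitable representative-type function attached to $F$. The natural candidate is $h_F$, since by construction $h_F(x,x^*)=c(x,x^*)$ precisely when $(x,x^*)\in G(A^F)$, and Lemma \ref{le5} already gives $h_F\geq c$. Theorem \ref{th3} requires three things: that the function be proper and convex, that both it and its partial conjugate dominate $c$, and that $\sqri(\pr_X(\dom h_F))\neq\emptyset$. So first I would check convexity of $h_F$: for fixed $x\in C$, $h_F(x,\cdot)$ is a supremum of affine functions in $x^*$, hence convex; but joint convexity in $(x,x^*)$ is where the hypothesis that $F(\cdot,y)$ is concave enters — writing $h_F(x,x^*)=\sup_{y\in C}\{\langle x^*,y\rangle - F(x,y)\}+\delta_C(x)$, each function $(x,x^*)\mapsto \langle x^*,y\rangle - F(x,y)+\delta_C(x)$ is convex because $-F(\cdot,y)$ is convex and $C$ is convex, and the supremum of convex functions is convex. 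Properness follows once we know $A^F$ is nonempty somewhere, or more directly: $\dom h_F \supseteq \{(x,x^*): x\in\sqri(C), x^*\in\partial(F(x,\cdot)+\delta_C)(x)\}$; actually properness is cleanest to get a posteriori, but one can also note $h_F(x,x^*)\geq c(x,x^*)>-\infty$ everywhere and $\dom h_F\neq\emptyset$ because for $x$ in the (nonempty) interior-like set $\sqri(C)$ the convex l.s.c. function $F(x,\cdot)+\delta_C$ has nonempty subdifferential at $x$ by a standard argument, giving a point where $h_F$ is finite.

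**Continuing.** Next I would verify the partial-conjugate condition $h_F^{*\top}\geq c$, i.e. $h_F^*(x^*,x)\geq\langle x^*,x\rangle$ for all $(x,x^*)$. Here the formula recorded before Lemma \ref{le5} is the key: $h_F^*(x^*,x)=\sup_{y\in C}\{\langle x^*,y\rangle + (F(y,\cdot)+\delta_C)^{**}(x)\}$. Since $F(x,\cdot)$ is convex and l.s.c. and $C$ is convex closed, $F(y,\cdot)+\delta_C$ is proper (it vanishes at $y$), convex and l.s.c., so Fenchel–Moreau gives $(F(y,\cdot)+\delta_C)^{**}=F(y,\cdot)+\delta_C$ on $X$. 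Taking $y=x$ in the supremum (legitimate when $x\in C$, and when $x\notin C$ we should see the value is $+\infty$ using a direction argument from $\sqri(C)$ — this is the point to be careful) yields $h_F^*(x^*,x)\geq \langle x^*,x\rangle + F(x,x)+\delta_C(x)=\langle x^*,x\rangle$. For $x\notin C$ one shows $h_F^*(x^*,x)=+\infty$: pick $y_0\in C$ and use that along the ray from $y_0$ the values $\langle x^*,y\rangle+F(y,\cdot)+\delta_C$ evaluated at $x$... more simply, since $\dom\big((F(y,\cdot)+\delta_C)\big)\subseteq C$ for each $y$, and we are free to also vary — actually the clean route is: $h_F^*(x^*,x)\geq \sup_{y\in C}\{\langle x^*,y\rangle\} + \big(\text{something}\big)$; I expect this $x\notin C$ case to require a short separation/recession argument and this is the main technical obstacle. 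Granting it, $h_F^{*\top}\geq c$ holds.

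**Conclusion of the argument.** Finally, $\pr_X(\dom h_F)\subseteq C$ and in fact equals a convex set containing $\sqri(C)$ (using the subdifferentiability remark above, every $x$ with $F(x,\cdot)+\delta_C$ subdifferentiable at $x$ lies in $\pr_X(\dom h_F)$, and in particular $\sqri(C)\subseteq\pr_X(\dom h_F)\subseteq C$ forces $\sqri(\pr_X(\dom h_F))\supseteq\sqri(C)\neq\emptyset$ by sandwiching, since $\cl$ and $\sqri$ are insensitive to sets between a convex set and its closure — more carefully, $\sqri$ of a convex set sandwiched between $\sqri(C)$ and $C$ is nonempty because it contains $\sqri(C)$). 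Thus all hypotheses of Theorem \ref{th3} are met with $h=h_F$, and the operator $\{(x,x^*):h_F^*(x^*,x)=c(x,x^*)\}$ is maximal monotone. It remains to identify this operator with $A^F$: one inclusion is that $G(A^F)=\{h_F=c\}\subseteq\{h_F^*=c\}$ always holds when $h_F\geq c$ is proper convex (Young's inequality forces equality in the conjugate at such points, as in Lemma \ref{le1}(iii)); the reverse inclusion then follows since the right-hand operator is monotone and $A^F$, being maximal by... wait — rather, we get that $\{h_F^*=c\}$ is maximal monotone and contains $G(A^F)$, so if we separately check $A^F$ is monotone (it need not be a priori, but $\{h_F^*=c\}\supseteq G(A^F)$ being monotone forces $A^F$ monotone) then by maximality $G(A^F)=\{h_F^*=c\}$, whence $A^F$ is maximal monotone. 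For the last claim $A^F={}^F\!\!A$: we have $G(A^F)\subseteq G({}^F\!\!A)$ is not automatic without monotonicity of $F$, so instead argue from Lemma \ref{le0} (whose hypotheses — $F(x,\cdot)$ convex, $F(\cdot,y)$ upper hemicontinuous, here even u.s.c. — are satisfied) giving $G({}^F\!\!A)\subseteq G(A^F)$, combined with maximality of $A^F$ and monotonicity of ${}^F\!\!A$ (immediate from its definition together with what we now know), yielding equality. The delicate points I expect to absorb the real work are the $x\notin C$ evaluation of $h_F^*$ and the careful bookkeeping of which monotonicity facts are genuinely free versus need the structural hypotheses.
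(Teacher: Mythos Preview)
Your overall strategy---build a convex function attached to $F$ and invoke Theorem~\ref{th3}---is the paper's, but the endgame has a genuine gap. Applying Theorem~\ref{th3} to $h=h_F$ tells you that the operator with graph $\{(x,x^*):h_F^*(x^*,x)=c(x,x^*)\}$ is maximal monotone. Since you have already computed (via Fenchel--Moreau) that $h_F^*(x^*,\cdot)=g_F(\cdot,x^*)$ on $X$, this graph is exactly $G\big({}^F\!\!A\big)$, \emph{not} $G\big(A^F\big)$. Your attempt to pass from one to the other fails at two points. First, the claimed inclusion $\{h_F=c\}\subseteq\{h_F^{*\top}=c\}$ is not delivered by Young's inequality: from $h_F(x,x^*)=c(x,x^*)$ and $h_F\geq c$ you only get $h_F^*(x^*,x)\geq c(x,x^*)$, the wrong direction; Lemma~\ref{le1}(iii) does not help either, because it presupposes the maximal monotonicity you are trying to prove. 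In fact $\{h_F=c\}\subseteq\{h_F^{*\top}=c\}$ is the inclusion $G(A^F)\subseteq G({}^F\!\!A)$, which in general requires $F$ to be monotone---not assumed here. Second, even granting that inclusion, a monotone operator contained in a maximal monotone one need not coincide with it; your maximality step is applied in the wrong direction.

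The paper closes this gap by exploiting the symmetry of the hypotheses: it applies Theorem~\ref{th3} a \emph{second} time, now to $g_F$. Concavity and upper semicontinuity of $F(\cdot,y)$ make $g_F$ convex and yield $g_F^{*\top}=h_F\geq c$ on $X\times X^*$, while $\pr_X(\dom g_F)=C$; hence $\{(x,x^*):g_F^*(x^*,x)=c(x,x^*)\}=\{h_F=c\}=G(A^F)$ is maximal monotone directly. Then Lemma~\ref{le0} gives $G({}^F\!\!A)\subseteq G(A^F)$ and, both operators being maximal, equality follows. (An alternative repair, if you want to keep only one application of Theorem~\ref{th3}: apply Lemma~\ref{le0} also to the bifunction $(u,v)\mapsto -F(v,u)$, whose hypotheses are precisely the concave/usc and convex/lsc assumptions swapped, to get the reverse inclusion $G(A^F)\subseteq G({}^F\!\!A)$; then $A^F={}^F\!\!A$ is maximal.)

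Two smaller remarks. Your worry about $x\notin C$ in showing $h_F^{*\top}\geq c$ is unfounded: once Fenchel--Moreau gives $(F(y,\cdot)+\delta_C)^{**}=F(y,\cdot)+\delta_C$, each term in the supremum is $+\infty$ when $x\notin C$, so $h_F^*(x^*,x)=+\infty$ trivially. And for $\pr_X(\dom h_F)$, you get the exact equality $\pr_X(\dom h_F)=C$ (not just a sandwich): for every $x\in C$ the proper convex lower semicontinuous function $F(x,\cdot)+\delta_C$ has a proper conjugate, so some $x^*$ makes $h_F(x,x^*)$ finite. This avoids the subdifferentiability-at-$\sqri(C)$ detour, which is not guaranteed in general.
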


\begin{proof} The convexity and topological assumptions on $C$ and $F(x, \cdot)$, for  $x\in C$, yield that the function $F(x, \cdot)+\delta_C$ is proper, convex and lower semicontinuous whenever $x\in C$. Then $(F(x, \cdot)+\delta_C)^{**}(z)=F(x, z)+\delta_C(z)$ whenever $x\in C$ and $z\in X$, consequently, via Lemma \ref{le5}, $h_F^{*^\top}=g_F\geq c$ on $X\times X^*$. Analogously, the convexity and topological assumptions on $C$ and  $-F(\cdot, y)$, $y\in C$, imply $h_F=g_F^{*^\top}\geq c$ on $X\times X^*$. Obviously, $h_F$ and $g_F$ are in this case convex functions, whose properness follows immediately, too.

One gets $\pr_X(\dom h_F) \subseteq \pr_X(\dom g_F) \subseteq C$. Taking an $x\in C$, since
$F(x, \cdot)+\delta_C$ is proper, convex and lower semicontinuous, its conjugate is proper (cf. \cite[Theorem 2.3.3]{ZZ}), so there exists an $x^*\in X^*$ such that $(F(x, \cdot)+\delta_C)^*(x^*)<+\infty$. Consequently, $h_F(x, x^*)<+\infty$, i.e. $C\subseteq \pr_X(\dom h_F)$. Therefore $\pr_X(\dom h_F) = \pr_X(\dom g_F) = C$. We are now ready to apply Theorem \ref{th3} for $h_F$ and $g_F$, obtaining that the operators (identified through their graphs)
$$\{(x,x^*)\in X\times X^*:h_F^*(x^*, x)=c(x, x^*)\}=\{(x,x^*)\in X\times X^*:g_F(x,x^*)=c(x, x^*)\} = G\big({^F}\!\!A\big)$$ and
$$\{(x,x^*)\in X\times X^*:g_F^*(x^*, x)=c(x, x^*)\}=\{(x,x^*)\in X\times X^*:h_F(x,x^*)=c(x, x^*)\}= G\big(A^F\big)$$ are maximal monotone.

Using Lemma \ref{le0}, it follows $G\big({^F}\!\!A\big)\subseteq G\big(A^F\big)$, consequently, $A^F={^F}\!\!A$, since both are maximal monotone operators.\end{proof}\\

If $C=X$ the condition $\sqri (C)\neq \emptyset$ is automatically satisfied and Theorem \ref{th6} yields the following statement, noting that the lower/ upper semicontinuity of a real valued convex/concave function on the entire space is equivalent to its continuity (cf. \cite[Proposition 2.1.6]{ZZ}).

\begin{theorem}\label{th7}  Let $F(x, x)=0$ for all $x\in X$, $F(x, \cdot)$ be convex and continuous for all $x\in X$ and $F(\cdot, y)$ concave and continuous for all $y\in X$. Then $A^F$ is maximal monotone and $A^F={^F}\!\!A$.
\end{theorem}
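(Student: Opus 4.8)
The plan is to deduce this statement directly from Theorem \ref{th6} by specializing to $C = X$. First I would check that the choice $C = X$ meets the structural requirements imposed on $C$ in Theorem \ref{th6}: the whole space is of course convex and closed, and $\sqri(X) = X \neq \emptyset$, since for every $x \in X$ the cone $\cone(X - x) = X$ is a closed linear subspace; hence the qualification condition $\sqri(C) \neq \emptyset$ there is automatically fulfilled.

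Next I would reconcile the hypotheses. A continuous function is in particular both lower and upper semicontinuous, so the assumption that $F(x, \cdot)$ is convex and continuous on $X$ gives that $F(x, \cdot)$ is convex and lower semicontinuous for each $x \in X$, while the assumption that $F(\cdot, y)$ is concave and continuous gives that $F(\cdot, y)$ is concave and upper semicontinuous for each $y \in X$. Together with $F(x, x) = 0$ for all $x \in X$ (the standing assumption of the section, here read with $C = X$), all the hypotheses of Theorem \ref{th6} are in force for $F$, and applying that theorem yields at once that $A^F$ is maximal monotone and that $A^F = {^F}\!\!A$.

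I do not expect any genuine obstacle: the only point deserving a word is the passage from the continuity assumed in the present statement to the one-sided semicontinuity demanded by Theorem \ref{th6}, and this is the trivial direction of the equivalence recorded in \cite[Proposition 2.1.6]{ZZ} (for a real-valued convex, respectively concave, function defined on all of $X$, semicontinuity of the pertinent type is in fact equivalent to continuity). Everything else reduces to the routine verification that $C = X$ is an admissible choice in the more general result.
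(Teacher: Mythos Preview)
Your proposal is correct and matches the paper's own argument: the paper also derives Theorem \ref{th7} directly from Theorem \ref{th6} by taking $C=X$, observing that $\sqri(X)\neq\emptyset$ is automatic and invoking \cite[Proposition 2.1.6]{ZZ} for the equivalence between continuity and one-sided semicontinuity of real-valued convex/concave functions on all of $X$.
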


\begin{remark}\label{re1} By Theorem \ref{th7} we prove one of the conjectures formulated at the end of \cite{IU}, actually slightly weakening its hypotheses since instead of taking $F$ continuous we ask it to be continuous in each of its variables. If $X$ is reflexive, Theorem \ref{th7} rediscovers \cite[Theorem 3.6(i)]{IU}, bringing the mentioned improvement to its hypotheses.\end{remark}

Taking $F$ to be monotone, here are some hypotheses that guarantee its maximality even in the absence of convexity assumptions in its first variable.

\begin{theorem}\label{th8} Let $C$ be convex and closed and $F$ monotone. If $\sqri (C)\neq \emptyset$, $F(x, \cdot)$ is convex  and lower semicontinuous for all $x\in C$ and $F(\cdot, y)$ upper hemicontinuous for all $y\in C$, then $F$ is maximal monotone.
\end{theorem}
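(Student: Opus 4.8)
The plan is to obtain the maximal monotonicity of $A^F$ by first establishing that of ${^F}\!\!A$ via Theorem~\ref{th3} applied to a convexified version of $h_F$, and then forcing the two operators to coincide by means of Lemma~\ref{le0}. Since $C$ is convex and closed and $F(x,\cdot)$ is convex and lower semicontinuous for every $x\in C$, the function $F(y,\cdot)+\delta_C$ is proper, convex and lower semicontinuous for each $y\in C$, so $(F(y,\cdot)+\delta_C)^{**}=F(y,\cdot)+\delta_C$ by the Fenchel--Moreau formula. Substituting this into the formula for $h_F^*$ recorded just before Lemma~\ref{le5} gives $h_F^*(x^*,x)=g_F(x,x^*)$ for all $(x,x^*)\in X\times X^*$, and Lemma~\ref{le5} then yields $h_F^*(x^*,x)=g_F(x,x^*)\geq c(x,x^*)$. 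As $F(\cdot,y)$ is only assumed upper hemicontinuous, $h_F$ itself need not be convex, so I would pass to $\bar h:=\overline{\co}\,h_F$. One checks that $h_F\geq c+\delta_{C\times X^*}$ and that $c+\delta_{C\times X^*}$ is convex and lower semicontinuous ($C$ being convex and closed), whence $\bar h\geq c+\delta_{C\times X^*}$; in particular $\bar h\geq c$, $\dom\bar h\subseteq C\times X^*$, and $\bar h$ is proper (it never equals $-\infty$, being everywhere above the real-valued $c$, and it is finite somewhere, being $\leq h_F$). Since conjugation is insensitive to passing to the lower semicontinuous convex hull, $\bar h^*=h_F^*$, hence $\bar h^{*\top}(x,x^*)=h_F^*(x^*,x)=g_F(x,x^*)\geq c(x,x^*)$ on $X\times X^*$. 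Finally, arguing as in the proof of Theorem~\ref{th6}, properness of $(F(x,\cdot)+\delta_C)^*$ gives $C\subseteq\pr_X(\dom h_F)\subseteq\pr_X(\dom\bar h)$, which combined with $\dom\bar h\subseteq C\times X^*$ forces $\pr_X(\dom\bar h)=C$, so that $\sqri\big(\pr_X(\dom\bar h)\big)=\sqri(C)\neq\emptyset$.

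With all hypotheses of Theorem~\ref{th3} verified for $\bar h$, that theorem shows $\{(x,x^*)\in X\times X^*:\bar h^*(x^*,x)=c(x,x^*)\}$ is maximal monotone. Since $\bar h^*(x^*,x)=g_F(x,x^*)$, this set equals $\{(x,x^*)\in X\times X^*:g_F(x,x^*)=c(x,x^*)\}=G\big({^F}\!\!A\big)$, so ${^F}\!\!A$ is maximal monotone. To conclude, Lemma~\ref{le0} applies (its hypotheses follow from those of the present theorem), giving $G\big({^F}\!\!A\big)\subseteq G\big(A^F\big)$; combined with $G\big(A^F\big)\subseteq G\big({^F}\!\!A\big)$, which holds since $F$ is monotone, this yields $A^F={^F}\!\!A$. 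Hence $A^F$ is maximal monotone, i.e.\ $F$ is maximal monotone.

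The step that I expect to demand the most care is the reduction in the first paragraph: one must ensure that replacing $h_F$ by $\overline{\co}\,h_F$ preserves at the same time the minorization $h_F\geq c+\delta_{C\times X^*}$ (so that $\dom\bar h$ remains inside $C\times X^*$ and $\pr_X(\dom\bar h)=C$) and the conjugate (so that the decisive bound $\bar h^{*\top}\geq c$ is inherited from $h_F^*(x^*,x)=g_F(x,x^*)\geq c(x,x^*)$). Everything else amounts to routine use of Young's inequality, the Fenchel--Moreau theorem, and Lemmas~\ref{le5} and~\ref{le0}.
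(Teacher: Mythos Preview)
Your overall strategy coincides with the paper's: pass to $\overline{\co}\,h_F$, show $h_F^{*\top}=g_F\geq c$ on $X\times X^*$, identify $\pr_X\big(\dom\overline{\co}\,h_F\big)=C$, apply Theorem~\ref{th3}, and then use Lemma~\ref{le0} together with $G\big(A^F\big)\subseteq G\big({^F}\!\!A\big)$ to conclude $A^F={^F}\!\!A$. The paper packages the last step slightly differently (it shows directly that $\{(x,x^*):h_F^*(x^*,x)=c(x,x^*)\}=G\big(A^F\big)$), but the content is the same.

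There is, however, one genuine error in your argument, precisely at the point you flagged as delicate. You claim that $c+\delta_{C\times X^*}$ is convex and lower semicontinuous, and deduce $\bar h\geq c+\delta_{C\times X^*}$ from $h_F\geq c+\delta_{C\times X^*}$. But the coupling $c(x,x^*)=\langle x^*,x\rangle$ is \emph{not} a convex function of $(x,x^*)$ (it is bilinear; e.g.\ on $\R\times\R$ one has $c(1,1)=c(-1,-1)=1$ while $c(0,0)=0$), so $c+\delta_{C\times X^*}$ is not convex and the passage to $\overline{\co}$ does not preserve that lower bound. Consequently neither $\bar h\geq c$ nor $\dom\bar h\subseteq C\times X^*$ follows from your reasoning as written.

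Both conclusions are nevertheless true and easy to repair. For $\bar h\geq c$: since $F$ is monotone, Lemma~\ref{le5} gives $h_F\geq g_F=h_F^{*\top}$ on $X\times X^*$; the right-hand side is convex and lower semicontinuous (it is the restriction of a conjugate), hence $\overline{\co}\,h_F\geq h_F^{*\top}=g_F\geq c$. This is exactly inequality~\eqref{E1} in the paper. For the domain: use $\pr_X(\dom h_F)\subseteq\pr_X(\dom\overline{\co}\,h_F)\subseteq\overline{\co}\,\pr_X(\dom h_F)$ (the paper's \eqref{E5}); since $\pr_X(\dom h_F)=C$ and $C$ is convex and closed, this forces $\pr_X(\dom\overline{\co}\,h_F)=C$. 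With these two fixes your proof is complete and essentially identical to the paper's.
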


\begin{proof} The convexity and topological assumptions on $C$ and $F(x, \cdot)$, for  $x\in C$, yield that the function $F(x, \cdot)+\delta_C$ is proper, convex and lower semicontinuous whenever $x\in C$. Then $(F(x, \cdot)+\delta_C)^{**}(z)=F(x, z)+\delta_C(z)$ whenever $x\in C$ and $z\in X$, whence $h_F^*(x^*, x)=g_F(x, x^*)$ for all $(x, x^*)\in X\times X^*$. Consequently, via Lemma \ref{le5} and taking into consideration the properties of the conjugate function, one has
\begin{equation}\label{E1}
h_F(x, x^*)\geq \overline{\co} h_F(x, x^*)\geq h_F^*(x^*, x)\geq c(x, x^*)\ \forall (x, x^*)\in X\times X^*.
\end{equation}
Assuming that $h_F$ were improper leads to a contradiction with \eqref{E1}, consequently $h_F$, $\overline{\co} h_F$ and $h_F^*$ are all proper.
Like in the proof of Theorem \ref{th6} one can show that $\pr_X(\dom h_F)=C$. One has
\begin{equation}\label{E5}
{\pr}_X(\dom h_F) \subseteq {\pr}_X(\dom \overline{\co} h_F) \subseteq \overline{\co} {\pr}_X(\dom h_F)
\end{equation}
and, since $C$ is convex and closed, we get $\pr_X\big(\dom \big(\overline{\co} h_F\big)\big) = C$.

In the following we show that
\begin{eqnarray} \label{E7}
G\big(A^F\big) &=& \big\{(x, x^*)\in X\times X^*: \overline{\co} h_F (x, x^*)=c(x, x^*)\big\}\nonumber \\
& = &\big\{(x, x^*)\in X\times X^*: h_F^* (x^*, x)=c(x, x^*)\big\}.
\end{eqnarray}
If $(x, x^*)\in G\big(A^F\big)$, \eqref{E1} yields $h_F^*(x^*, x)=c(x, x^*)$.

Let now $(x, x^*)\in X\times X^*$ for which $h_F^*(x^*, x)=c(x, x^*)$. Then $(x, x^*)\in G\big({^F}\!\!A\big)$, so Lemma \ref{le0} yields $(x, x^*)\in G\big(A^F\big)$. This implies that $\overline{\co} h_F(x, x^*)=c(x, x^*)$ holds if and only if $(x, x^*)\in G\big(A^F\big)$. Applying Theorem \ref{th3} for $\overline{\co} h_F$, it follows that $A^F$ is maximal monotone, i.e. $F$ is maximal monotone, too.\end{proof}\\

When the space $X$ is reflexive, the regularity condition $\sqri (C)\neq \emptyset$ is no longer necessary and we rediscover \cite[Proposition 3.1]{HK}, by means of representative functions, employing tools of convex analysis and without renorming $X$.

\begin{theorem}\label{th9}  Let $X$ be reflexive, $C$ be convex and closed and $F$ monotone. If $F(x, \cdot)$ is convex and lower semicontinuous for all $x\in C$ and $F(\cdot, y)$ upper hemicontinuous for all $y\in C$, then $F$ is maximal monotone.
\end{theorem}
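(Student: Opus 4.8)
The plan is to reuse the argument of Theorem \ref{th8} essentially word for word up to the point where the regularity condition $\sqri(C)\neq\emptyset$ was invoked, and then, since $X$ is now assumed reflexive, to finish with Theorem \ref{th2} in place of Theorem \ref{th3}; the whole point is that Theorem \ref{th2} carries a genuine \emph{equivalence} and hence needs no hypothesis on $\pr_X(\dom h)$.

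First I would record, exactly as in the proof of Theorem \ref{th8}, that the assumptions on $C$ and on $F(x,\cdot)$ make $F(x,\cdot)+\delta_C$ proper, convex and lower semicontinuous for every $x\in C$, so the Fenchel--Moreau formula gives $(F(y,\cdot)+\delta_C)^{**}(x)=F(y,x)+\delta_C(x)$ for all $x\in X$, $y\in C$; substituting this into the formula for $h_F^*$ yields $h_F^*(x^*,x)=g_F(x,x^*)$ on $X\times X^*$. Combining this with Lemma \ref{le5} (here the monotonicity of $F$ is used, for $h_F\geq g_F$) and the elementary properties of the conjugate, one obtains, as in \eqref{E1},
$$h_F(x, x^*)\geq \overline{\co} h_F(x, x^*)\geq h_F^*(x^*, x)\geq c(x, x^*)\quad\forall (x, x^*)\in X\times X^*,$$
which at once forces $h_F$, $\overline{\co} h_F$ and $h_F^*$ to be proper and, combined with Lemma \ref{le0} and a squeezing argument (exactly as for \eqref{E7}), gives
$$G\big(A^F\big)=\big\{(x,x^*):\overline{\co} h_F(x,x^*)=c(x,x^*)\big\}=\big\{(x,x^*):h_F^*(x^*,x)=c(x,x^*)\big\}.$$
None of these steps used $\sqri(C)\neq\emptyset$, so they carry over verbatim to the present setting.

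Next I would check that $h:=\overline{\co} h_F$ satisfies the hypotheses of Theorem \ref{th2}: it is proper, convex and lower semicontinuous by construction, it satisfies $h\geq c$ by the chain above, and --- since the conjugate does not distinguish a function from its lower semicontinuous convex hull --- one has $(\overline{\co} h_F)^*=h_F^*$, whence $h^{*\top}(x,x^*)=h_F^*(x^*,x)\geq c(x,x^*)$, i.e. $h^{*\top}\geq c$ on $X\times X^*$. Reflexivity of $X$ then permits applying Theorem \ref{th2} to $h$, which yields that the monotone operator $\{(x,x^*)\in X\times X^*:\overline{\co} h_F(x,x^*)=c(x,x^*)\}$ is maximal; by the identity displayed above this operator is exactly $A^F$, so $A^F$, and hence $F$, is maximal monotone.

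I do not expect a genuine obstacle here: the substantive observation is simply that in the reflexive setting Theorem \ref{th2} can stand in for Theorem \ref{th3}, which makes the assumption $\sqri(C)\neq\emptyset$ unnecessary. The single point that must be handled with a little care is the transition from $h_F$ to $\overline{\co} h_F$ --- one must be sure that it preserves both the inequality $\geq c$ and the identity $(\overline{\co} h_F)^*=h_F^*$ on which $h^{*\top}\geq c$ rests, and that the description of $G(A^F)$ in \eqref{E7} survives this passage. Once this bookkeeping is settled, the reflexive argument is in fact shorter than that of Theorem \ref{th8}.
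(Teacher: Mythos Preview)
Your proposal is correct and follows exactly the line the paper indicates: establish \eqref{E1} and \eqref{E7} as in Theorem~\ref{th8} (neither step uses $\sqri(C)\neq\emptyset$), then invoke a maximality criterion for $\overline{\co}\,h_F$. The only discrepancy is that the paper's terse proof literally cites Theorem~\ref{th3}, whereas you (correctly) invoke Theorem~\ref{th2}; since Theorem~\ref{th3} carries the hypothesis $\sqri(\pr_X(\dom h))\neq\emptyset$ --- which here reduces to the very condition $\sqri(C)\neq\emptyset$ that Theorem~\ref{th9} is designed to drop --- the paper's reference is almost certainly a slip, and your use of the reflexive criterion Theorem~\ref{th2} is what was intended.
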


\begin{proof} Things work in the lines of the proof of Theorem \ref{th8}, noticing that \eqref{E1} and \eqref{E7} are fulfilled.
Then we apply Theorem \ref{th3}.\end{proof}\\

When $C=X$ we obtain from Theorem \ref{th8} the following statement.

\begin{theorem}\label{th10} Let $F$ be monotone and fulfilling $F(x, x)=0$ for all $x\in X$. If $F(x, \cdot)$ is convex  and continuous for all $x\in X$ and $F(\cdot, y)$ upper hemicontinuous for all $y\in X$, then $F$ is maximal monotone.
\end{theorem}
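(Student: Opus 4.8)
The plan is to recognize Theorem \ref{th10} as the special case $C=X$ of Theorem \ref{th8} and then simply verify that the hypotheses of the latter are in force. First I would note that for $C=X$ the indicator function $\delta_X$ vanishes, so $F(x,\cdot)+\delta_X$ coincides with $F(x,\cdot)$, and that $X$ is trivially convex and closed. Next I would check the regularity condition: for every $x\in X$ one has $\cone(X-x)=X$, which is a closed linear subspace, hence $\sqri(X)=X\neq\emptyset$, so $\sqri(C)\neq\emptyset$ holds automatically.

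It then remains to match the remaining assumptions of Theorem \ref{th8}. By hypothesis $F$ is monotone with $F(x,x)=0$ for all $x\in X$, $F(\cdot,y)$ is upper hemicontinuous for every $y\in X$, and $F(x,\cdot)$ is convex for every $x\in X$. The only discrepancy is that Theorem \ref{th8} asks $F(x,\cdot)$ to be lower semicontinuous, whereas here it is assumed continuous; but continuity of a real-valued function immediately implies lower semicontinuity (and, as recalled before Theorem \ref{th7}, for a convex real-valued function on the whole space the two notions coincide with ordinary continuity). With all hypotheses of Theorem \ref{th8} satisfied, that theorem gives at once that $F$ is maximal monotone.

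Since the argument is a pure specialization, I do not expect any genuine obstacle; the points to be careful about are merely confirming that $\sqri(C)\neq\emptyset$ is automatic when $C=X$ and that the formally stronger continuity hypothesis on $F(x,\cdot)$ subsumes the lower semicontinuity required in Theorem \ref{th8}. One could even observe that, in view of Theorem \ref{th8}, the continuity of $F(x,\cdot)$ could be weakened here to lower semicontinuity, but the continuous formulation is the one parallelling the results from the literature that this section extends.
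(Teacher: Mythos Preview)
Your proposal is correct and follows exactly the paper's approach: the paper obtains Theorem \ref{th10} simply as the case $C=X$ of Theorem \ref{th8}, noting (as in the discussion preceding Theorem \ref{th7}) that lower semicontinuity of a real-valued convex function on the whole space amounts to continuity. Your verification that $\sqri(X)=X\neq\emptyset$ and that continuity implies the required lower semicontinuity is precisely what is needed.
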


\begin{remark}\label{re2} In \cite[Theorem 3.6(ii)]{IU} the same conclusion is obtained when $X$ is reflexive for a monotone bifunction $F$ that fulfills $F(x, x)=0$ for all $x\in X$,
by assuming $F(x, \cdot)$ only convex for all $x\in X$ and $F(\cdot, y)$ continuous for all $y\in X$. However, we doubt that this result holds without any topological assumption on the functions $F(x, \cdot)$, $x\in X$, since in its proof is used \cite[Theorem 3.4(ii)]{IU} whose hypotheses should contain also the lower semicontinuity of $F(x, \cdot)$ for all $x\in X$. A similar comment can be made also for \cite[Theorem 3.6(iii)]{IU} and for the conjectures extending the two mentioned statements to nonreflexive spaces given at the end of \cite{IU}.\end{remark}

Since when we apply Lemma \ref{le4} in the proofs of Theorem \ref{th8} and Theorem \ref{th9} it follows as a byproduct that $F$ is $BO$-maximal monotone, we can rediscover, in the reflexive case, and extend, when $X$ is a general Banach space, \cite[Proposition 3.2]{ACR}, as follows.

\begin{corollary}\label{co11} Let $C$ be convex and closed with $\sqri (C)\neq \emptyset$ and $F$ be $BO$-maximal monotone. If $F(x, \cdot)$ is convex and lower semicontinuous for all $x\in C$, then $F$ is maximal monotone.\end{corollary}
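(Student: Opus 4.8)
The plan is to derive this from Theorem~\ref{th8} (or rather its proof), exploiting the fact that $BO$-maximal monotonicity of a monotone $F$ is exactly the condition ${^F}\!\!A = A^F$, which is precisely what the technical argument of Theorem~\ref{th8} supplies ``for free'' along the way. First I would observe that since $F$ is $BO$-maximal monotone it is in particular monotone and satisfies $F(x,x)=0$ for all $x\in C$, so all the hypotheses of Theorem~\ref{th8} are in place \emph{except possibly} the upper hemicontinuity of $F(\cdot,y)$; the point of the corollary is that this last assumption is no longer needed, because $BO$-maximality already gives us the equality of the two operators. So the proof should isolate the part of the argument of Theorem~\ref{th8} that does not use upper hemicontinuity.

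Concretely, I would proceed as follows. As in the proof of Theorem~\ref{th8}, the convexity, lower semicontinuity of $F(x,\cdot)$ and the convexity and closedness of $C$ give that $F(x,\cdot)+\delta_C$ is proper, convex and lower semicontinuous for each $x\in C$, hence $(F(x,\cdot)+\delta_C)^{**}=F(x,\cdot)+\delta_C$ on $X$, so that $h_F^*(x^*,x)=g_F(x,x^*)$ for all $(x,x^*)$. Combining this with Lemma~\ref{le5} yields \eqref{E1}, namely $h_F(x,x^*)\ge \overline{\co}\,h_F(x,x^*)\ge h_F^*(x^*,x)\ge c(x,x^*)$ everywhere on $X\times X^*$, and properness of $h_F$, $\overline{\co}\,h_F$ and $h_F^*$ follows exactly as there. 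Likewise $\pr_X(\dom h_F)=C$ and, using \eqref{E5}, $\pr_X(\dom \overline{\co}\,h_F)=C$, so $\sqri(\pr_X(\dom \overline{\co}\,h_F))=\sqri(C)\neq\emptyset$.

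It remains to establish the analogue of \eqref{E7}, i.e.\ that $\{(x,x^*):\overline{\co}\,h_F(x,x^*)=c(x,x^*)\}=\{(x,x^*):h_F^*(x^*,x)=c(x,x^*)\}=G(A^F)$. For $(x,x^*)\in G(A^F)$, \eqref{E1} gives $h_F^*(x^*,x)=c(x,x^*)$. Conversely, if $h_F^*(x^*,x)=c(x,x^*)$ then (by the identity $h_F^*(x^*,x)=g_F(x,x^*)$ and the characterization of $g_F=c$) we get $(x,x^*)\in G\big({^F}\!\!A\big)$; now \emph{here is where the argument differs from Theorem~\ref{th8}}: instead of invoking Lemma~\ref{le0} (which needs upper hemicontinuity), I use that $F$ is $BO$-maximal monotone, hence ${^F}\!\!A=A^F$, so $(x,x^*)\in G(A^F)$. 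The squeeze in \eqref{E1} then forces $\overline{\co}\,h_F(x,x^*)=c(x,x^*)$ as well, giving the displayed chain of equalities. Finally, apply Theorem~\ref{th3} to the proper convex function $\overline{\co}\,h_F$ (which satisfies $\overline{\co}\,h_F\ge c$ and, since its conjugate equals $h_F^*$, also $(\overline{\co}\,h_F)^{*\top}\ge c$, with the regularity condition on $\pr_X(\dom \overline{\co}\,h_F)$ verified above): the operator $\{(x,x^*):(\overline{\co}\,h_F)^*(x^*,x)=c(x,x^*)\}=G(A^F)$ is maximal monotone, i.e.\ $F$ is maximal monotone.

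The only place requiring care — and what I expect to be the main obstacle — is making sure that replacing Lemma~\ref{le0} by the hypothesis ${^F}\!\!A=A^F$ is legitimate at exactly the point where it is used, and that no hidden appeal to upper hemicontinuity survives elsewhere in the transcribed argument; once that bookkeeping is done, everything else is a verbatim repetition of the reasoning in Theorem~\ref{th8}.
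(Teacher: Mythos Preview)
Your proposal is correct and is exactly the argument the paper has in mind: the text preceding Corollary~\ref{co11} explains that the proof of Theorem~\ref{th8} goes through verbatim once the appeal to Lemma~\ref{le0} (the only place where upper hemicontinuity is used) is replaced by the hypothesis ${^F}\!\!A=A^F$ furnished by $BO$-maximal monotonicity. One small remark: $F(x,x)=0$ is the standing assumption of Section~2 rather than a consequence of $BO$-maximal monotonicity, but this does not affect the argument.
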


\begin{corollary}\label{co12} Let $X$ be reflexive, $C$ convex and closed and $F$ $BO$-maximal monotone. If $F(x, \cdot)$ is convex and lower semicontinuous for all $x\in C$, then $F$ is maximal monotone.\end{corollary}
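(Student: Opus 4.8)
The plan is to follow the proof of Theorem~\ref{th9} (itself modelled on that of Theorem~\ref{th8}) almost word for word, the single substitution being that the upper hemicontinuity of $F(\cdot,y)$ — which there entered, through Lemma~\ref{le0}, only in order to yield the inclusion $G\big({^F}\!\!A\big)\subseteq G\big(A^F\big)$ — is now replaced by the $BO$-maximal monotonicity of $F$, which gives outright ${^F}\!\!A=A^F$, hence $G\big({^F}\!\!A\big)=G\big(A^F\big)$. Note that, $BO$-maximal monotonicity being by definition a property of a \emph{monotone} bifunction, $F$ is in particular monotone, so the full strength of Lemma~\ref{le5} is available.

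First I would verify that \eqref{E1} still holds in this setting. The convexity and closedness of $C$ together with the convexity and lower semicontinuity of $F(x,\cdot)$ for every $x\in C$ make $F(x,\cdot)+\delta_C$ proper, convex and lower semicontinuous, so $(F(x,\cdot)+\delta_C)^{**}(z)=F(x,z)+\delta_C(z)$ for all $x\in C$ and $z\in X$; substituting this into the expression for $h_F^*$ recalled before Lemma~\ref{le5} gives $h_F^*(x^*,x)=g_F(x,x^*)$ for all $(x,x^*)\in X\times X^*$. Since the function $(x,x^*)\mapsto h_F^*(x^*,x)$ is convex and lower semicontinuous and, by Lemma~\ref{le5}, $h_F(x,x^*)\geq g_F(x,x^*)=h_F^*(x^*,x)\geq c(x,x^*)$, one gets $h_F(x,x^*)\geq\overline{\co}h_F(x,x^*)\geq h_F^*(x^*,x)\geq c(x,x^*)$ for all $(x,x^*)$, which is \eqref{E1}; exactly as in the proof of Theorem~\ref{th8} it follows that $h_F$, $\overline{\co}h_F$ and $h_F^*$ are all proper.

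Next I would obtain \eqref{E7}, namely that the three sets $G\big(A^F\big)$, $\{(x,x^*)\in X\times X^*:\overline{\co}h_F(x,x^*)=c(x,x^*)\}$ and $\{(x,x^*)\in X\times X^*:h_F^*(x^*,x)=c(x,x^*)\}$ coincide. Indeed, $(x,x^*)\in G\big(A^F\big)$ is equivalent to $h_F(x,x^*)=c(x,x^*)$, which by \eqref{E1} forces $\overline{\co}h_F(x,x^*)=h_F^*(x^*,x)=c(x,x^*)$; conversely, if $\overline{\co}h_F(x,x^*)=c(x,x^*)$ then \eqref{E1} gives $h_F^*(x^*,x)=c(x,x^*)$, i.e. $g_F(x,x^*)=c(x,x^*)$, i.e. $(x,x^*)\in G\big({^F}\!\!A\big)=G\big(A^F\big)$, the last equality being precisely the $BO$-maximal monotonicity of $F$. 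Finally, since $X$ is reflexive and $\overline{\co}h_F$ is proper, convex and lower semicontinuous with $\overline{\co}h_F\geq c$ and $(\overline{\co}h_F)^*(x^*,x)=h_F^*(x^*,x)\geq c(x,x^*)$ for all $(x,x^*)$, I would apply Theorem~\ref{th2} to $h=\overline{\co}h_F$ to conclude that $\{(x,x^*)\in X\times X^*:\overline{\co}h_F(x,x^*)=c(x,x^*)\}$ is maximal monotone; by \eqref{E7} this set is $G\big(A^F\big)$, so $A^F$, and hence $F$, is maximal monotone.

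Since the whole argument recycles material already proved, I do not expect a genuine obstacle. The two points deserving a moment's care are that \eqref{E1} truly survives the removal of the upper hemicontinuity assumption — which it does, its derivation using only the convexity and lower semicontinuity of $F(x,\cdot)$, the convexity and closedness of $C$ and the monotonicity of $F$ — and the routine identity $(\overline{\co}h_F)^*=h_F^*$, legitimate here because $h_F$ admits the convex lower semicontinuous minorant $(x,x^*)\mapsto h_F^*(x^*,x)$.
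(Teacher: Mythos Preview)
Your proposal is correct and matches the paper's own justification for Corollary~\ref{co12}: the paper does not write out a separate proof but remarks that in the proofs of Theorems~\ref{th8} and~\ref{th9} the upper hemicontinuity of $F(\cdot,y)$ enters only through Lemma~\ref{le0} to secure $G\big({^F}\!\!A\big)\subseteq G\big(A^F\big)$, so that assuming $BO$-maximal monotonicity (i.e.\ ${^F}\!\!A=A^F$) directly replaces that step, and everything else in the derivation of \eqref{E1} and \eqref{E7} survives unchanged. Your choice to invoke Theorem~\ref{th2} rather than Theorem~\ref{th3} in the final step is the natural one in the reflexive setting and is entirely in line with the paper's intent; the verification that $(\overline{\co}h_F)^*=h_F^*$ is indeed routine here since $h_F$ admits the proper convex lower semicontinuous minorant $h_F^{*\top}$.
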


When $C=X$ one can formulate another maximality criterium for a monotone bifunction, extending \cite[Proposition 3.5]{HK} to general Banach spaces.

\begin{theorem}\label{th11} Let $F$ be monotone and fulfilling $F(x, x)=0$ for all $x\in X$. If $D\big(A^F\big)=X$ and $F(\cdot, y)$ is upper hemicontinuous for all $y\in X$, then $F$ is maximal monotone.
\end{theorem}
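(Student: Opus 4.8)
The plan is to argue directly from the definition of maximal monotonicity rather than through a representative function: here $F(x,\cdot)$ is not assumed convex, so $h_F$ need not be convex and the route through Theorem~\ref{th3} used in Theorems~\ref{th8} and \ref{th9} is unavailable. Since $F$ is monotone with $F(x,x)=0$, the operator $A^F$ is monotone, and it suffices to show that every pair $(\bar x,\bar x^*)\in X\times X^*$ which is monotonically related to $G(A^F)$ --- i.e. satisfies $\langle x^*-\bar x^*,\,x-\bar x\rangle\ge 0$ for all $(x,x^*)\in G(A^F)$ --- already belongs to $G(A^F)$. Fixing such a pair and an arbitrary $y\in X$, the task reduces to proving $F(\bar x,y)\ge\langle\bar x^*,y-\bar x\rangle$ (for $y=\bar x$ this is trivial, so one may think of $y\neq\bar x$).

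The key device is to slide along the segment $[\bar x,y]$: set $x_t:=\bar x+t(y-\bar x)$ for $t\in(0,1)$. Because $D(A^F)=X$, I may pick $x_t^*\in A^F(x_t)$. Evaluating the defining inequality of $A^F(x_t)$ at the point $y$ and using $F(x_t,x_t)=0$ gives
\begin{equation*}
F(x_t,y)\ \ge\ \langle x_t^*,\,y-x_t\rangle\ =\ (1-t)\,\langle x_t^*,\,y-\bar x\rangle .
\end{equation*}
On the other hand, since $x_t-\bar x=t(y-\bar x)$ with $t>0$, monotone relatedness applied to $(x_t,x_t^*)\in G(A^F)$ yields $\langle x_t^*,\,y-\bar x\rangle\ge\langle\bar x^*,\,y-\bar x\rangle$; multiplying this by $1-t>0$ and combining with the previous display I get
\begin{equation*}
F(x_t,y)\ \ge\ (1-t)\,\langle\bar x^*,\,y-\bar x\rangle\qquad\text{for all }t\in(0,1).
\end{equation*}

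Finally I would let $t\downarrow 0$. Upper hemicontinuity of $F(\cdot,y)$ means $t\mapsto F(x_t,y)$ is upper semicontinuous on the segment $[\bar x,y]$, hence $\limsup_{t\downarrow 0}F(x_t,y)\le F(\bar x,y)$; the displayed lower bound forces $\limsup_{t\downarrow 0}F(x_t,y)\ge\langle\bar x^*,y-\bar x\rangle$. Therefore $F(\bar x,y)\ge\langle\bar x^*,y-\bar x\rangle$, and since $y\in X$ was arbitrary (and $\bar x\in X=C$), this says $\bar x^*\in A^F(\bar x)$, i.e. $(\bar x,\bar x^*)\in G(A^F)$; consequently $A^F$, and hence $F$, is maximal monotone.

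The only point needing a little care --- the ``main obstacle'' --- is choosing the right test point: one must feed the subgradient-type inequality for $x_t^*$ precisely the far endpoint $y$ (so that $y-x_t$ stays a positive multiple of $y-\bar x$ and the monotone-relatedness estimate plugs in cleanly), while the limit is taken at the near endpoint $\bar x$, where upper hemicontinuity is used; the rest is bookkeeping. In contrast to \cite[Proposition~3.5]{HK}, no reflexivity or renorming of $X$ enters, which is exactly the intended extension to general Banach spaces.
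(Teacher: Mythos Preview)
Your argument is correct: it is the classical Minty-type proof that a hemicontinuous monotone operator with full domain is maximal, transported to the bifunction setting, and every step checks out.

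However, it is \emph{not} the route taken in the paper, and your opening remark that ``the route through Theorem~\ref{th3} \ldots\ is unavailable'' is inaccurate. The paper does go through representative functions: from $D(A^F)=X$ it deduces $\partial F(x,\cdot)(x)\neq\emptyset$, hence $\overline{\co}\,F(x,\cdot)(x)=0$, which gives $h_F^{*\top}\ge c$; then Theorem~\ref{th3} is applied to $\overline{\co}\,h_F$, and the resulting maximal monotone graph is identified with $G(A^F)$ by introducing the convexified bifunction $H(x,y):=\overline{\co}\,F(x,\cdot)(y)$ and invoking Lemma~\ref{le0} for $H$. Your proof, by contrast, is entirely elementary --- it needs no conjugates, no convex hulls, and no external maximality criterion --- and is in this sense cleaner. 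What the paper's approach buys is consistency with its declared programme (maximal monotonicity via representative functions) and the byproduct, recorded in Remark~\ref{re3}, that $\overline{\co}\,h_F$ and $h_F^{*\top}\big|_{X\times X^*}$ are representative functions of $A^F$; your direct argument does not produce any representative function.
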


\begin{proof} As $D\big(A^F\big)=X$, for all $x\in X$ one has $\partial F(x, \cdot)(x)\neq \emptyset$, which yields $\overline{\co} F(x, \cdot)(x) = F(x, x) =0$.
On the other hand, for all $x\in X$ it holds $X = \dom F(x, \cdot)\subseteq \dom \overline{\co} F(x, \cdot)$, which implies
$\dom \overline{\co} F(x, \cdot) = X$ and via \cite[Proposition 2.2.5]{ZZ}, as $\overline{\co} F(x, \cdot)(x) =0$, also the properness of $\overline{\co} F(x, \cdot)$.
Then, for any $(x, x^*)\in X\times X^*$, one has
$$
h_F^*(x^*, x)  = \sup_{y\in X} \big\{\langle x^*, y\rangle + (F(y, \cdot))^{**}(x)\big\}= \sup_{y\in X} \big\{\langle x^*, y\rangle + \overline{\co} F(y, \cdot)(x)\big\} \geq$$ 
$$\langle x^*, x\rangle + \overline{\co} F(x, \cdot)(x) =  \langle x^*, x\rangle,$$
consequently, $h_F\geq \overline{\co} h_F\geq h_F^{*\tau}\geq c$ on $X\times X^*$. As $D\big(A^F\big)=X$, $\pr_X (\dom h_F) =X$, using \eqref{E5} it follows
$\pr_X (\dom \overline{\co} h_F)=X$. Applying Theorem \ref{th3} for $\overline{\co} h_F$, the operator having the graph
$\big\{(x, x^*)\in X\times X^*: h_F^* (x^*, x)=c(x, x^*)\big\}$
turns out to be maximal monotone. This graph includes $G\big(A^F\big)$.
To show that the opposite inclusion holds, too, let $(x, x^*)\in X\times X^*$ for which $h_F^* (x^*, x) = c(x, x^*)$. Then
$h_F^* (x^*, x) \leq c(x, x^*)$, so for all $y\in X$ it holds $\overline{\co} F (y, \cdot)(x) \leq \langle x^*, x-y\rangle$. This means nothing but $(x, x^*)\in G\big({^H}\!\!A\big)$, where the bifunction $H:X\times X\rightarrow \R$ is defined by $H(x, y) := \overline{\co} F (x, \cdot)(y)$. It follows immediately that $H(z, z)=0$ for all $z\in X$. As $H(z, \cdot) = \overline{\co} F (z, \cdot)$ is convex for all $z\in X$ and for all $y\in X$ one can verify that $H(\cdot, y)$ is upper hemicontinuous, Lemma \ref{le0} yields $(x, x^*)\in G\big(A^H\big)$.
This means that for all $y\in X$ one has $\overline{\co} F (x, \cdot)(y) \geq \langle x^*, y-x\rangle$, followed by $F (x, y) \geq \langle x^*, y-x\rangle$. Thus $(x, x^*)\in G\big(A^F\big)$, therefore \eqref{E7} holds. Consequently, $F$ is maximal monotone.\end{proof}

\begin{remark}\label{re3} One can see in the proofs of Theorem \ref{th6}, Theorem \ref{th8} and Theorem \ref{th11} that not only $\overline{\co} h_F$ (which coincides with $h_F$ under the hypotheses of the first of them), but also the restriction to $X\times X^*$ of $h_F^{*\top}$ are representative functions of the maximal monotone operator $A^F$.\end{remark}

In Theorem \ref{th6}, Theorem \ref{th8} and Theorem \ref{th11} we have shown with the help of the theory of representative functions that under some hypotheses $A^F$ is maximal monotone. Now let us show that the representative functions of it identified there are actually representative to $A^F$ whenever it is maximal monotone.

\begin{theorem}\label{th13} Let $F$ be maximal monotone. Then $\overline{\co} h_F$ and the restriction to $X\times X^*$ of $h_F^{*\top}$ are representative functions of it.
\end{theorem}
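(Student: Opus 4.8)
The plan is to verify directly that $\overline{\co}\,h_F$ satisfies the three defining conditions of a representative function of $A^F$, and then to deduce the corresponding statement for the restriction of $h_F^{*\top}$ from Lemma~\ref{le1}(ii). Since $F$ is maximal monotone, $A^F$ is a maximal monotone operator with $G(A^F)\neq\emptyset$, so every representative function of it is automatically proper; this also guarantees $h_F$ is proper (as $h_F(x,x^*)=c(x,x^*)$ on the nonempty set $G(A^F)$), hence $\overline{\co}\,h_F$ is proper, convex and strongly lower semicontinuous by construction. The two remaining points are the inequality $\overline{\co}\,h_F\geq c$ on $X\times X^*$ and the identity $G(A^F)\subseteq\{(x,x^*):\overline{\co}\,h_F(x,x^*)=c(x,x^*)\}$.

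First I would establish $\overline{\co}\,h_F\geq c$. The chain to exploit is the one already appearing in \eqref{E1}: by Lemma~\ref{le5} we have $h_F\geq g_F\geq h_F^{*\top}\geq c$ (the middle inequality $g_F\geq h_F^{*\top}$ is from Lemma~\ref{le5}, and $h_F^{*\top}\geq c$ follows because $h_F^*(x^*,x)\geq \langle x^*,x\rangle - h_F(x,x)$ and $h_F(x,x)=0$ for $x\in C$ by the $F(x,x)=0$ normalization, while for $x\notin C$ one has $h_F(x,\cdot)\equiv+\infty$ so $\delta_C(x)$ forces the bound trivially; more carefully, $h_F^{*\top}\geq c$ is exactly the condition $g_F\geq c$ transported, and $g_F\geq c$ is stated in Lemma~\ref{le5}). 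Since $c$ is affine and continuous, hence convex and lower semicontinuous, and $\overline{\co}\,h_F$ is the largest convex lsc minorant of $h_F$, from $h_F\geq c$ we get $\overline{\co}\,h_F\geq c$ immediately. So this step is essentially a citation of Lemma~\ref{le5} together with the minorant property of $\overline{\co}$.

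Next I would prove $G(A^F)\subseteq\{\overline{\co}\,h_F = c\}$. Take $(x,x^*)\in G(A^F)$. By construction of $h_F$ one has $h_F(x,x^*)=c(x,x^*)$. Combined with $\overline{\co}\,h_F\leq h_F$ and $\overline{\co}\,h_F\geq c$, we sandwich $c(x,x^*)\leq\overline{\co}\,h_F(x,x^*)\leq h_F(x,x^*)=c(x,x^*)$, giving equality. This is the easy direction and requires no maximality. For the $h_F^{*\top}$ part: once $\overline{\co}\,h_F$ is shown to be a representative function of the maximal monotone operator $A^F$, Lemma~\ref{le1}(ii) says the restriction of $(\overline{\co}\,h_F)^{*\top}$ to $X\times X^*$ is again a representative function of $A^F$; and since $(\overline{\co}\,h_F)^* = h_F^*$ (conjugation is insensitive to passing to the lower semicontinuous convex hull, by \cite{ZZ} standard results), this is precisely the restriction of $h_F^{*\top}$. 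Alternatively, one checks directly that $h_F^{*\top}$ is convex, weak$^*$-weak lsc (being a conjugate), dominates $c$ by the chain above, and equals $c$ on $G(A^F)$: indeed Lemma~\ref{le1}(iii) gives $\{h_T^{*\top}=c\}=G(T)$ for any representative function, so once we know $\overline{\co}\,h_F$ is representative the equality $h_F^{*\top}=c$ on $G(A^F)$ is free. The main obstacle, such as it is, is making sure the normalization $F(x,x)=0$ is what yields $h_F(x,x)=0$ for $x\in C$ and hence the crucial $h_F^{*\top}\geq c$; but that is already packaged in Lemma~\ref{le5}, so the proof is short — it is really just assembling Lemma~\ref{le5}, the definition of $\overline{\co}$, and Lemma~\ref{le1}.
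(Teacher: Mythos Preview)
There is a genuine gap in your argument for $\overline{\co}\,h_F\geq c$. You write ``$c$ is affine and continuous, hence convex and lower semicontinuous'', but the coupling $c(x,x^*)=\langle x^*,x\rangle$ is \emph{bilinear}, not affine, and on any nontrivial space it is neither convex nor concave (e.g.\ on $\R\times\R$ take the points $(1,-1)$ and $(-1,1)$). So from $h_F\geq c$ you cannot conclude $\overline{\co}\,h_F\geq c$ by the minorant property of $\overline{\co}$. Your fallback attempt, to obtain $h_F^{*\top}\geq c$ directly, is also not correct: Lemma~\ref{le5} gives $g_F\geq h_F^{*\top}$, not the reverse, so $g_F\geq c$ does not transport to $h_F^{*\top}\geq c$; and the expression ``$h_F(x,x)$'' does not typecheck, since the second argument of $h_F$ lives in $X^*$. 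In short, nothing in your argument for step~(b) actually uses the maximal monotonicity of $A^F$ beyond properness, yet that hypothesis is exactly what is needed here.

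The paper closes this gap by exploiting maximality through the Fitzpatrick function. From $h_F(x,x^*)=c(x,x^*)$ on $G(A^F)$ one has $h_F\leq c+\delta_{G(A^F)}$; conjugating gives $h_F^*(x^*,x)\geq (c+\delta_{G(A^F)})^*(x^*,x)=\varphi_{A^F}(x,x^*)$, and since $A^F$ is maximal monotone, $\varphi_{A^F}\geq c$. Because $h_F^{*\top}$ restricted to $X\times X^*$ is convex and strongly lower semicontinuous and, via Lemma~\ref{le5}, satisfies $h_F\geq h_F^{*\top}$, this yields the chain $\psi_{A^F}\geq\overline{\co}\,h_F\geq h_F^{*\top}\geq\varphi_{A^F}\geq c$, which simultaneously proves both $\overline{\co}\,h_F\geq c$ and the equality of $\{\overline{\co}\,h_F=c\}$ and $\{h_F^{*\top}=c\}$ with $G(A^F)$. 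Once you insert this step, the rest of your outline (including the use of $(\overline{\co}\,h_F)^*=h_F^*$ and Lemma~\ref{le1}(ii)) goes through.
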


\begin{proof}  The maximal monotonicity of $F$ implies via Lemma \ref{le1} that
\begin{eqnarray*}
G\big(A^F\big) & = & \big\{(x, x^*)\in X\times X^*: \psi_{A^F}(x, x^*)=c(x, x^*)\big\}\\
& = & \big\{(x, x^*)\in X\times X^*: \varphi_{A^F}(x, x^*)=c(x, x^*)\big\}.\end{eqnarray*}
On the other hand, the way $h_F$ is constructed implies
$(c+\delta_{A^F})(x, x^*)\geq h_F(x, x^*)$ for all $(x, x^*)\in X\times X^*$, which yields
$$h_F^*(x^*, x)\geq (c+\delta_{A^F})^*(x^*, x)= \psi_{A^F}^*(x^*, x) = \varphi_{A^F}(x, x^*)\ \forall (x, x^*)\in X\times X^*.$$

Since the monotonicity of $F$ implies, via Lemma \ref{le5}, $h_F(x, x^*)\geq \overline{\co} h_F(x, x^*)\geq h_F^*(x^*, x)$ for all $(x, x^*)\in X\times X^*$, it follows immediately that
for all $(x, x^*)\in X\times X^*$ it holds
$$
\psi_{A^F}(x, x^*)\geq \overline{\co} h_F(x, x^*)\geq h_F^*(x^*, x)\geq \varphi_{A^F}(x, x^*) \geq c(x, x^*).
$$
Consequently,
\begin{eqnarray*}
G\big(A^F\big) &=& \big\{(x, x^*)\in X\times X^*: \overline{\co} h_F(x, x^*)=c(x, x^*)\big\}\\
 & = & \big\{(x, x^*)\in X\times X^*: h_F^*(x^*, x)=c(x, x^*)\big\},
\end{eqnarray*}
which implies that $\overline{\co} h_F$ and $h_F^{*\top}$ restricted to $X\times X^*$ are representative functions of $A^F$.\end{proof}

\begin{remark}\label{re0} In the lines of the proof of Theorem \ref{th13}, one can show that if $T:X\rightrightarrows X^*$ is a maximal monotone operator and $h:X\times X^*\rightarrow \overline \R$ is a function fulfilling $h(x, x^*) \geq h^*(x^*, x)$ for all $(x, x^*)\in X\times X^*$ and $h(x, x^*)\leq c(x, x^*)$ whenever $(x, x^*)\in G(T)$, then
 $\overline{\co} h_F$ and the restriction to $X\times X^*$ of $h_F^{*\top}$ are representative functions of $T$.\end{remark}

\section{The sum of two monotone bifunctions}

One of the most dealt with questions regarding maximal monotone operators is what guarantees that the sum of two of them remains maximal monotone. This issue was extended for maximal monotone bifunctions in \cite{HK}, by means of equilibrium problems. We provide another answer in this matter, preceded by a preliminary result. Even though statements in general Banach spaces can be given (see, for instance, \cite[Chapter VII]{SS}), we take $X$ reflexive in this section. Let $F$ and $G$ be monotone bifunctions defined on $C$.

\begin{lemma}\label{le14} One has $A^F(x)+A^G(x)\subseteq A^{F+G}(x)$ for all $x\in X$ and $F+G$ is monotone.
\end{lemma}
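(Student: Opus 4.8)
The plan is to verify both assertions directly from the definitions, since no deep machinery is needed. For the inclusion $A^F(x)+A^G(x)\subseteq A^{F+G}(x)$, I would first dispose of the trivial case: if $x\notin C$, then $A^F(x)=A^G(x)=\emptyset$ and the left-hand side is empty, so the inclusion holds vacuously. Assume now $x\in C$ and pick $x^*\in A^F(x)$ and $y^*\in A^G(x)$. By the definitions of $A^F$ and $A^G$, for every $y\in C$ one has
\begin{equation*}
F(x,y)-F(x,x)\geq \langle x^*,y-x\rangle \quad\text{and}\quad G(x,y)-G(x,x)\geq \langle y^*,y-x\rangle .
\end{equation*}
Adding these two inequalities gives $(F+G)(x,y)-(F+G)(x,x)\geq \langle x^*+y^*,y-x\rangle$ for all $y\in C$, which is precisely the condition defining membership of $x^*+y^*$ in $A^{F+G}(x)$. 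Since $x^*$ and $y^*$ were arbitrary, the claimed inclusion follows.

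For the monotonicity of $F+G$, I would simply add the two defining inequalities: for any $x,y\in C$, $F(x,y)+F(y,x)\leq 0$ and $G(x,y)+G(y,x)\leq 0$, hence $(F+G)(x,y)+(F+G)(y,x)\leq 0$, so $F+G$ is monotone.

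There is no genuine obstacle here; this is a bookkeeping lemma preparing the later discussion of when $A^{F+G}$ is maximal monotone. The only points worth a moment's care are the convention on points outside $C$ in the first assertion, and the fact that the operators $A^F$, $A^G$ and $A^{F+G}$ are all defined with the same subtracted diagonal terms (here $F(x,x)=G(x,x)=0$ by the standing assumption of the section), so that the two inequalities add together cleanly to the one defining $A^{F+G}(x)$.
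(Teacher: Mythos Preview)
Your proof is correct and follows essentially the same approach as the paper's: both add the defining inequalities for $A^F(x)$ and $A^G(x)$ to obtain membership in $A^{F+G}(x)$, and add the two monotonicity inequalities to conclude that $F+G$ is monotone. Your version is marginally more explicit (handling $x\notin C$ separately and retaining the diagonal terms before invoking $F(x,x)=G(x,x)=0$), but the argument is identical in substance.
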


\begin{proof} Let $x\in X$, $y^*\in A^F(x)$ and $z^*\in A^G(x)$. Then $x\in C$ and for all $y\in C$ one has
$F(x, y)\geq \langle y^*, y-x\rangle$ and $G(x, y)\geq \langle z^*, y-x\rangle$. Adding these inequalities, one gets $F(x, y)+G(x, y)\geq \langle y^*+z^*, y-x\rangle$ for all $y\in C$, i.e.
$y^*+z^*\in A^{F+G}(x)$.

Analogously, writing what the monotonicity of $F$ and $G$ means and adding the obtained inequalities one gets that $F+G$ is monotone.\end{proof}

\begin{theorem}\label{th15} Let $F$ and $G$ be maximal monotone, with $f_F$ and $f_G$ two corresponding representative functions. If $0\in \sqri \big(D\big(A^F\big)-D(A^G)\big)$ (or, equivalently, $0\in \sqri \big(\pr_X (\dom f_F) - \pr_X (\dom f_G)\big)$), then $F+G$ is maximal monotone, $A^F+A^G=A^{F+G}$ and $f_F\square_2 f_G$ is a representative function of $A^{F+G}$.
\end{theorem}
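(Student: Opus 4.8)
The plan is to apply the maximality criterion of Theorem~\ref{th3} to the partial infimal convolution $h:=f_F\square_2 f_G$, to identify the operator it produces with $A^F+A^G$, and then to close everything with Lemma~\ref{le14}. I would first record the easy facts. Since $F$ and $G$ are maximal monotone bifunctions, $A^F$ and $A^G$ are maximal monotone operators and $f_F,f_G$ are proper, convex and lower semicontinuous representative functions of them; hence $f_F\geq c$, $f_G\geq c$ and, by Lemma~\ref{le1}(ii) (reflexivity making the restriction to $X\times X^*$ vacuous), $f_F^{*\top}\geq c$ and $f_G^{*\top}\geq c$ on $X\times X^*$, these being representative functions of $A^F$, resp. $A^G$, as well. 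The function $h$ is convex, being a partial infimal convolution of jointly convex functions, and $h\geq c$, since for any $(x,x^*)$ and any splitting $x^*=u^*+v^*$ one has $f_F(x,u^*)+f_G(x,v^*)\geq\langle u^*,x\rangle+\langle v^*,x\rangle=c(x,x^*)$. The hypothesis $0\in\sqri\big(\pr_X(\dom f_F)-\pr_X(\dom f_G)\big)$ forces $\pr_X(\dom f_F)\cap\pr_X(\dom f_G)\neq\emptyset$, so $h$ is finite at a point and hence proper (it never equals $-\infty$ since $h\geq c$); moreover $\pr_X(\dom h)=\pr_X(\dom f_F)\cap\pr_X(\dom f_G)$.

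The technical heart will be the behaviour of $h^{*\top}$. Writing $h$ as the infimal projection of $(x,u^*,v^*)\mapsto f_F(x,u^*)+f_G(x,v^*)$ along the linear map $(x,u^*,v^*)\mapsto(x,u^*+v^*)$ and dualizing, one finds that in general only the inequality $h^{*\top}\leq f_F^{*\top}\square_2 f_G^{*\top}$ holds on $X\times X^*$; turning it into an equality, together with the attainment of the infimum defining the right-hand side, requires a qualification condition on the domains of the two summands on the product space $X\times X^*\times X^*$, which reduces exactly to $0\in\sqri\big(\pr_X(\dom f_F)-\pr_X(\dom f_G)\big)$. Invoking the corresponding Fenchel--Rockafellar / Attouch--Brezis type duality result, available here since $X$ is reflexive (see, e.g., \cite{BGW, ZZ, SS}), under our hypothesis the infimal convolution is exact, $h$ is lower semicontinuous, and $h^{*\top}=f_F^{*\top}\square_2 f_G^{*\top}$ on $X\times X^*$; consequently $h^{*\top}\geq c$, by the same one-line argument as for $h\geq c$, now applied to $f_F^{*\top},f_G^{*\top}\geq c$. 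Since the same qualification condition also yields $\sqri\big(\pr_X(\dom h)\big)\neq\emptyset$, Theorem~\ref{th3} applies to $h$ and gives that the operator $S:=\big\{(x,x^*)\in X\times X^*:h^{*\top}(x,x^*)=c(x,x^*)\big\}$ is maximal monotone.

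It would remain to identify $S$. If $x^*=u^*+v^*$ with $u^*\in A^F(x)$ and $v^*\in A^G(x)$, then $h^{*\top}(x,x^*)\leq f_F^{*\top}(x,u^*)+f_G^{*\top}(x,v^*)=c(x,u^*)+c(x,v^*)=c(x,x^*)$, so $(x,x^*)\in S$. Conversely, if $h^{*\top}(x,x^*)=c(x,x^*)$, exactness produces a splitting $x^*=u^*+v^*$ with $f_F^{*\top}(x,u^*)+f_G^{*\top}(x,v^*)=c(x,u^*)+c(x,v^*)$; since each summand dominates the corresponding $c$-term, equality must hold in each, and Lemma~\ref{le1}(iii) gives $u^*\in A^F(x)$, $v^*\in A^G(x)$. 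Thus $S=A^F+A^G$ as graphs, so $A^F+A^G$ is maximal monotone. By Lemma~\ref{le14}, $A^F+A^G\subseteq A^{F+G}$ and $A^{F+G}$ is monotone, whence maximality forces $A^{F+G}=A^F+A^G$; in particular $F+G$ is maximal monotone. Finally, $G\big(A^{F+G}\big)\subseteq\big\{(x,x^*):h(x,x^*)=c(x,x^*)\big\}$: for $x^*=u^*+v^*$ with $u^*\in A^F(x)$, $v^*\in A^G(x)$ one has $h(x,x^*)\leq f_F(x,u^*)+f_G(x,v^*)=c(x,u^*)+c(x,v^*)=c(x,x^*)$ and $h\geq c$; combined with the convexity and lower semicontinuity of $h$ established above, this shows that $h=f_F\square_2 f_G$ is a representative function of $A^{F+G}$.

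The step I expect to be the main obstacle is the one in the second paragraph: establishing the exact duality formula $h^{*\top}=f_F^{*\top}\square_2 f_G^{*\top}$ and the lower semicontinuity of $h$ under the stated regularity condition — this is precisely where the hypothesis is indispensable, everything else being bookkeeping with the definitions and Lemmas~\ref{le1} and \ref{le14}. One should also justify the parenthetical equivalence of the two forms of the hypothesis: for any representative function $h_T$ of a maximal monotone operator $T$ one has $D(T)\subseteq\pr_X(\dom h_T)$ and $\overline{\co}\big(\pr_X(\dom h_T)\big)=\overline{\co}\big(D(T)\big)$ (the latter via the sandwich $\varphi_T\leq h_T\leq\psi_T$ from Lemma~\ref{le1}(i) and the known domain properties of $\varphi_T$ and $\psi_T$), which makes the $\sqri$-conditions on $D\big(A^F\big)-D\big(A^G\big)$ and on $\pr_X(\dom f_F)-\pr_X(\dom f_G)$ coincide.
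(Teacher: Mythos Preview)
Your argument is correct in substance and reaches the same conclusion, but the route differs from the paper's. The paper's proof is a two-line affair: it simply invokes \cite[Corollary 3.6]{PZ} as a black box to conclude that $A^F+A^G$ is maximal monotone with $f_F\square_2 f_G$ as a representative function, and then closes exactly as you do with Lemma~\ref{le14}. What you have done instead is essentially \emph{reprove} that Penot--Z\u alinescu corollary inside the paper's framework, by computing $h^{*\top}$ via an Attouch--Br\'ezis type formula and then appealing to the maximality criterion of Theorem~\ref{th3}. Your approach is more self-contained and shows explicitly where the regularity condition is consumed; the paper's is shorter because that work has already been packaged in the cited reference.

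Two small technical remarks on your write-up. First, since the ambient space in this section is reflexive, it is cleaner to use Theorem~\ref{th2} (applied to $h^{*\top}$, which is proper, convex, lower semicontinuous with $h^{*\top}\geq c$, and satisfies $(h^{*\top})^{*\top}=h^{**}=\overline{\co}\,h\geq c$) rather than Theorem~\ref{th3}. This sidesteps your claim that the qualification condition ``also yields $\sqri\big(\pr_X(\dom h)\big)\neq\emptyset$'', which you assert without proof; the implication $0\in\sqri(A-B)\Rightarrow\sqri(A\cap B)\neq\emptyset$ is not entirely obvious in infinite dimensions and deserves either a reference or the bypass just described. Second, the lower semicontinuity of $h=f_F\square_2 f_G$ that you need at the very end (to call $h$ a representative function) does follow from the exact-duality machinery you invoke, but you should be precise that it comes from applying the Attouch--Br\'ezis formula in the reverse direction, i.e.\ to $f_F^{*\top}$ and $f_G^{*\top}$, using that their $\pr_X$-domain difference satisfies the same $\sqri$-condition (this is where the parenthetical equivalence you discuss in the last paragraph is actually used). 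None of this affects correctness, only the level of detail.
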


\begin{proof} By \cite[Corollary 3.6]{PZ} we obtain that the hypotheses yield the maximal monotonicity of $A^F+A^G$, to which $f_F\square_2 f_G$ is a representative function.
Then Lemma \ref{le14} implies that $A^F(x)+A^G(x)= A^{F+G}(x)$ for all $x\in X$. Consequently, $F+G$ is maximal monotone and $f_F\square_2 f_G$ is a representative function of $A^{F+G}$, too.\end{proof}

\begin{remark}\label{re4} Note that under the hypotheses of Theorem \ref{th15} also the function $(f_F\square_2 f_G)^{*\top}$ is a representative function of $A^{F+G}$. If one takes $f_F:=
\overline{\co} h_F$ and $f_G:=\overline{\co} h_G$, then it holds $$(f_F\square_2 f_G)^*(x^*, x) = \sup_{y\in C}\big\{\langle x^*, y\rangle + (F(y,\cdot)+\delta_C)^{**}(x) + (G(y,\cdot)+\delta_C)^{**}(x)\big\} \leq h_{F+G}^*(x^*,x)$$ for all $(x, x^*)\in X\times X^*$. Thus the just identified representative function of $A^{F+G}$ is smaller than the ones obtained for it via Theorem \ref{th13}.\end{remark}

\begin{remark}\label{re5} If both $F$ and $G$ satisfy the hypotheses of one of Theorem \ref{th6}, Theorem \ref{th8}, Theorem \ref{th9} or, when $C=X$, Theorem \ref{th11}, then $F+G$ fulfills them, too, and this has as consequence its maximal monotonicity.\end{remark}

Now let us present a situation, different from the one displayed in Theorem \ref{th15}, when the inclusion proven in Lemma \ref{le14} turns out to be actually an equality. Note that the reflexivity of the space $X$ plays no role in this statement.

\begin{proposition}\label{pr16} Let $F$ and $G$ be monotone bifunctions defined on the convex and closed set $C$ fulfilling $F(x, x)=G(x, x)=0$ for all $x\in C$, such that for all $x\in C$ the functions $F(x, \cdot)$ and $G(x, \cdot)$ are convex and lower semicontinuous. If $0\in \sqri (C-C)$, then $A^F+A^G= A^{F+G}$.\end{proposition}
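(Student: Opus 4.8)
The plan is to deduce the asserted operator identity from the additivity of the convex subdifferential, using the representation $A^F(x)=\partial(F(x,\cdot)+\delta_C)(x)$ recorded in Section 1 (and likewise for $G$ and for $F+G$). For $x\notin C$ all three operators are empty by definition, so the equality is trivial there; hence I fix $x\in C$. Since $\delta_C+\delta_C=\delta_C$, the function $(F+G)(x,\cdot)+\delta_C$ on $X$ coincides with the sum $(F(x,\cdot)+\delta_C)+(G(x,\cdot)+\delta_C)$, so that $A^{F+G}(x)=\partial\bigl((F(x,\cdot)+\delta_C)+(G(x,\cdot)+\delta_C)\bigr)(x)$.

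The key step is then one application of the Attouch--Br\'ezis subdifferential sum rule. As in the proof of Theorem \ref{th8}, the hypotheses on $C$ and on $F(x,\cdot)$, $G(x,\cdot)$ guarantee that $f:=F(x,\cdot)+\delta_C$ and $g:=G(x,\cdot)+\delta_C$ are proper (they take the finite value $0$ at $x$ and, being $\R$-valued on $C$, never take $-\infty$), convex and lower semicontinuous, with $\dom f=\dom g=C$. Hence $\dom f-\dom g=C-C$, and the hypothesis $0\in\sqri(C-C)$ is exactly the regularity condition ``$\cone(\dom f-\dom g)$ is a closed linear subspace'' under which $\partial(f+g)(x)=\partial f(x)+\partial g(x)$; see, e.g., \cite[Theorem 2.8.7]{ZZ}. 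This gives $A^{F+G}(x)=\partial f(x)+\partial g(x)=A^F(x)+A^G(x)$. Combining the two cases yields $A^{F+G}=A^F+A^G$; in fact only the inclusion ``$\subseteq$'' of the sum rule is genuinely needed here, the reverse being already contained in Lemma \ref{le14}.

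The only point requiring real care is matching the hypotheses: one must invoke the version of the sum formula valid in an arbitrary Banach (indeed Fr\'echet) space, and check that $0\in\sqri(C-C)$ is precisely the domain-difference qualification for $f$ and $g$ — this is where the properness of $f,g$ and the identification of both of their domains with $C$ enter. Everything else (the reduction off $C$, the splitting $\delta_C=\delta_C+\delta_C$) is routine, and — as promised in the remark preceding the statement — neither reflexivity of $X$, nor any renorming, nor $\sqri(C)\neq\emptyset$ is used.
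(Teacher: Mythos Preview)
Your proof is correct and follows essentially the same route as the paper: both reduce to the identity $A^F(x)=\partial(F(x,\cdot)+\delta_C)(x)$, observe that the two summands have common domain $C$, and then invoke \cite[Theorem 2.8.7]{ZZ} under the qualification $0\in\sqri(C-C)$ to split the subdifferential. Your write-up is slightly more explicit (handling $x\notin C$, checking properness, and noting that only the nontrivial inclusion is needed in view of Lemma~\ref{le14}), but there is no substantive difference in approach.
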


\begin{proof} Let $x\in C$. One has $\dom (F(x, \cdot) + \delta_C) = \dom (G(x, \cdot)+\delta_C)= \dom ((F+G)(x, \cdot)+\delta_C)=C$. By definition, $A^F(x)=\partial (F(x, \cdot)+\delta_C)(x)$. Note also that $(F(x, \cdot)+\delta_C)+(G(x, \cdot)+\delta_C)=(F+G)(x, \cdot)+\delta_C$.
By \cite[Theorem 2.8.7]{ZZ}, the hypotheses imply
$$\partial (F(x, \cdot)+\delta_C)(x)+\partial (G(x, \cdot)+\delta_C)(x)=\partial (F(x, \cdot)+G(x, \cdot)+\delta_C)(x).$$ Consequently, $A^F(x)+A^G(x)= A^{F+G}(x)$ and since $x\in C$ was arbitrarily chosen, the conclusion follows.\end{proof}

\begin{remark}\label{re6} Note that the hypotheses of Proposition \ref{pr16} ensure that $\overline{\co} h_{F+G}(x,$ $x^*)\geq h_{F+G}^*(x^*$, $x)\geq c(x, x^*)$ for all $(x, x^*)\in X\times X^*$. Unfortunately, this is not enough in order to guarantee the maximality of $F+G$, which would follow for instance provided the $BO$-maximal monotonicity of this bifunction. However, checking also Remark \ref{re5}, this additional assumption would make, at least in the reflexive case, the condition $0\in \sqri (C-C)$ redundant. Therefore, it remains as an open question what should one add to the hypotheses of Proposition \ref{pr16} in order to obtain the maximality of $F+G$ under no stronger hypotheses than the ones in Remark \ref{re5}.\end{remark}

\noindent{\bf Acknowledgements.} The authors are grateful to their colleague E.R. Csetnek for valuable comments and suggestions on an earlier draft of the paper.


\begin{thebibliography}{99}

\bibitem{ACR} M. A\"it Mansour, Z. Chbani, H. Riahi (2003): {\it Recession bifunction and solvability of noncoercive equilibrium problems}, Communications in Applied Analysis 7(2--3), 369--377.

\bibitem{BO} E. Blum, W. Oettli (1994): {\it From optimization and variational inequalities to equilibrium problems}, The Mathematics Student 63(1--4), 123--145.

\bibitem{BC} R.I. Bo\c t, E.R. Csetnek (2008): {\it An Application of the bivariate inf-convolution formula to enlargements of monotone operators}, Set-Valued Analysis 16(7--8), 983--997.

\bibitem{BGW} R.I. Bo\c t, S.-M. Grad, G. Wanka (2006): {\it Maximal monotonicity for the precomposition with a linear operator},
SIAM Journal on Optimization 17(4), 1239--1252.

\bibitem{BGw} R.I. Bo\c t, S.-M. Grad, G. Wanka (2007): {\it Weaker constraint qualifications in maximal monotonicity}, Numerical Functional Analysis and Optimization 28(1--2), 27--41.

\bibitem{BS} R.S. Burachik, B.F. Svaiter (2002): {\it Maximal monotone
operators, convex functions and a special family of enlargements},
Set-Valued Analysis 10(4), 297--316.

\bibitem{IU} A.N. Iusem (2011): {\it On the maximal monotonicity of diagonal subdifferential operators} Journal of Convex Analysis 18(2), 489--503.

\bibitem{MS} M. Marques Alves, B.F. Svaiter (2010): {\it Maximal monotonicity, conjugation and the duality product in non-reflexive Banach spaces}, Journal of Convex Analysis 17(2), 553--563.

\bibitem{HK} N. Hadjisavvas, H. Khatibzadeh (2010): {\it Maximal monotonicity of bifunctions}, Optimization 59(2), 147--160.

\bibitem{PZ} J.-P. Penot, C. Z\u alinescu (2005) {\it Some problems about the representation of monotone operators by convex functions} The ANZIAM Journal 47(1), 1--20.

\bibitem{SS} S. Simons (2008): {\it From Hahn-Banach to monotonicity}, Lecture Notes in Mathematics 1693, Springer-Verlag, Berlin.

\bibitem{ZZ} C. Z\u alinescu (2002): {\it Convex analysis in general vector spaces}, World Scientific, Singapore.

\end{thebibliography}
\end{document}